\newcommand{\ol}{\overline}
\newcommand{\uloopr}[1]{\ar@'{@+{[0,0]+(-4,5)}@+{[0,0]+(0,10)}@+{[0,0] +(4,5)}}^{#1}}
\newcommand{\uloopd}[1]{\ar@'{@+{[0,0]+(5,4)}@+{[0,0]+(10,0)}@+{[0,0]+ (5,-4)}}^{#1}}
\newcommand{\dloopr}[1]{\ar@'{@+{[0,0]+(-4,-5)}@+{[0,0]+(0,-10)}@+{[0, 0]+(4,-5)}}_{#1}}
\newcommand{\dloopd}[1]{\ar@'{@+{[0,0]+(-5,4)}@+{[0,0]+(-10,0)}@+{[0,0 ]+(-5,-4)}}_{#1}}
\newcommand{\luloop}[1]{\ar@'{@+{[0,0]+(-8,2)}@+{[0,0]+(-10,10)}@+{[0, 0]+(2,2)}}^{#1}}
\newcommand{\dotedge}{\ar@{.}}
\newcommand{\eqedge}{\ar@{=}}
\theoremstyle{plain}
\newtheorem{theorem}{Theorem}[section]
\newtheorem{lemma}[theorem]{Lemma}
\newtheorem{proposition}[theorem]{Proposition}
\newtheorem{corollary}[theorem]{Corollary}
\theoremstyle{definition}
\newtheorem{definition}[theorem]{Definition}
\newtheorem{example}[theorem]{Example}
\newtheorem{remark}[theorem]{Remark}
\newtheorem*{remark*}{Remark}
\newtheorem*{remarks*}{Remarks}
\newtheorem*{assumption*}{Assumption}
\newtheorem{notation}[theorem]{Notation}
\numberwithin{equation}{section}
\begin{document}

\title{Finitely presented simple modules over Leavitt path algebras}
\author{Pere Ara}
\address{Departament de Matem\`atiques, Facultat de Ci\`encies, Edifici C, Universitat Aut\`onoma de Barcelona,
08193 Bellaterra (Barcelona), Spain.} \email{para@mat.uab.cat}
\author{Kulumani M. Rangaswamy}
\address{Department of Mathematics, University of Colorado at Colorado Springs, Colorado Springs, Colorado 
80918, USA}
\email{krangasw@uccs.edu}
\thanks{The first-named author was supported by DGI
MICIIN-FEDER MTM2011-28992-C02-01, and by the Comissionat per Universitats i
Recerca de la Generalitat de Catalunya.}
\subjclass[2010]{Primary 16D60; Secondary 16D70} \keywords{Leavitt path
algebra, simple module, primitive ideal, finitely presented module.}

\maketitle


\begin{abstract}
Let $E$ be an arbitrary graph and $K$ be any field. We construct various
classes of non-isomorphic simple modules over the Leavitt path algebra
$L_{K}(E)$ induced by vertices which are infinite emiters, closed paths which
are exclusive cycles and paths which are infinite,\ and call these simple
modules Chen modules. It is shown that every primitive ideal of $L_{K}(E)$ can
be realized as the annihilator ideal of some Chen module. 
Our main result establishes the equivalence between a graph theoretic condition and 
various conditions concerning the structure of simple modules over $L_K(E)$.
\end{abstract}

\section{Introduction}

Leavitt path algebras were introduced and initially studied in
\cite{AA}, \cite{AMP}, as algebraic analogues of graph C$^{\ast}$-algebras
and as natural generalizations of the Leavitt algebras of type $(1,n)$ built in \cite{Lea}.
The study of the module theory over Leavitt path algebras was initiated in 
\cite{AB}, in connection with some questions in algebraic K-theory.  
Very recently, following the results of \cite{GR}, Chen \cite{C}
has provided a method of constructing simple modules $V_{[p]}$ over a Leavitt path
algebra $L_{K}(E)$ of an arbitrary graph $E$ by using the equivalence class
$[p]$ of infinite paths tail-equivalent to a fixed
infinite path $p$ in $E$. (See below for the definition of tail-equivalence 
for infinite paths.)
He also constructed simple modules $\mathbf{N}_{w}$
corresponding to various sinks $w$ in $E$. The authors have used in \cite{AR} this family
of simple modules to determine the structure of the Leavitt path algebras of arbitrary graphs
which have only a finite number of isoclasses of simple modules.
It is as well interesting to observe that Chen's work is related 
to some constructions in non-commutative algebraic 
geometry (see \cite{Smithtiling} and \cite{Smithadvs}).

In this paper, we introduce
additional classes of simple modules using vertices which are infinite
emitters and also exclusive cycles, and call all these simple modules over
$L_{K}(E)$ \textit{Chen modules}. We give a description of the annihilators of
the various Chen modules and, as a consequence, we show that every primitive
ideal of $L_{K}(E)$ can be realized as the annihilator of a Chen module.
Here we take advantage of results from \cite{Ranga}, describing the structure 
of the primitive ideals over a Leavitt path algebra of an arbitrary graph (see also
\cite{ABR}, where the structure of primitive Leavitt path algebras of arbitrary graphs 
is described.) We also show, using results from \cite{C}, that the Chen modules are pairwise non-isomorphic.

Next we investigate the structure of simple modules over a Leavitt path algebra of a {\it finite}
graph $E$. The structure of the simple finitely presented $L_K(E)$-modules was determined in \cite{AB}
in terms of the irreducible finite-dimensional representations of the usual path algebra of the reverse
graph $\ol{E}$ of $E$. A lot is known about these representations, for instance Le Bruyn and Procesi 
determined in \cite[Section 5]{LP} the possible dimension vectors for them. So, a natural question is 
to determine all the finite graphs $E$ such that all simple 
$L_K(E)$-modules are finitely presented. One can also ask what are the connections between finitely presented simple 
modules and Chen simple modules. To begin
with, we show that the Chen module $V_{[p]}$ corresponding to an infinite path
$p$ is finitely presented if and only if $p$ is tail-equivalent to the
rational infinite path $ggg\cdot\cdot\cdot$ where $g$ is some closed
path.  

For an algebra $A$, 
denote by $\widehat{A}$ the set of isoclasses of simple left $A$-modules, and by ${\rm Prim}(A)$ the set of primitive ideals of $A$.
There is a canonical map
$$\widehat{A}\longrightarrow {\rm Prim} (A)$$
sending  $[N]$ to $\text{Ann}_A(N)$, the annihilator of $N$.

\medskip

Our main result is the following:

\begin{theorem}
 \label{theor:all-characterizations}
 Let $E$ be a finite graph and $K$ an arbitrary field. Write $L=L_K(E)$. 
 Then the following conditions are equivalent:
 \begin{enumerate}
  \item  Every simple left $L$-module is finitely presented.
\item  Every simple Chen module is finitely presented.
  \item  Every vertex $v$ in $E$ is the base of at most one cycle.
\item The map $\widehat{L}\to {\rm Prim}(L)$ is a bijection.
\item All simple left $L$-modules are Chen modules.
 \end{enumerate}
 \end{theorem}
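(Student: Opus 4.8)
The plan is to take condition (3) as the hub and show that each of (1), (2), (4) and (5) is equivalent to it. Two of the implications are immediate or easy. First, (1) $\Rightarrow$ (2) is trivial, since Chen modules are simple modules. For (2) $\Rightarrow$ (3) I would argue contrapositively: if some vertex $v$ is the base of two distinct cycles $c_1 \neq c_2$, then concatenating them according to an aperiodic rule (say $c_1 c_2 c_1 c_1 c_2 c_1 c_1 c_1 c_2 \cdots$) yields an infinite path $p$ at $v$ that is not eventually periodic, hence tail-equivalent to no $ggg\cdots$ with $g$ a closed path. By the criterion already established, $V_{[p]}$ is then a Chen module that is not finitely presented, so (2) fails.

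The structural core is the direction (3) $\Rightarrow$ (1), (4) and (5). Assuming (3), I would first show that every nontrivial strongly connected component of $E$ is a single cycle: if a vertex of such a component emitted two distinct edges remaining in the component, completing each by a shortest return path (which repeats no vertex) would produce two distinct cycles based at that vertex, against (3). Hence every infinite path of $E$ is eventually periodic, and $E$ consists of pairwise disjoint cycles and sinks joined by a finite acyclic part. Passing to the relevant subquotients, each cycle $c$ contributes (once the targets of its exits are divided out) a ring $M_n(K[x,x^{-1}])$, whose simple modules are the exclusive-cycle Chen modules, one family per irreducible polynomial, with the untwisted $V_{[c^\infty]}$ among them; each sink $w$ contributes the Chen module $\mathbf{N}_w$. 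This yields (5) at once, and also (1): $V_{[c^\infty]}$ is finitely presented because $c^\infty$ is periodic, and the remaining modules admit a finite presentation because $E$ is finite. For (4) I would note that the map $\widehat{L}\to{\rm Prim}(L)$ is always surjective, being the realization of every primitive ideal as the annihilator of a simple module, and that under (3) distinct cycles, distinct sinks, and distinct irreducible polynomials within one cycle produce distinct primitive annihilators, so the map is injective as well.

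To return to (3) I would close two more arrows. The implication (4) $\Rightarrow$ (3) is again contrapositive: if $v$ lies on cycles $c_1 \neq c_2$, then $V_{[c_1^\infty]}$ and $V_{[c_2^\infty]}$ are non-isomorphic Chen modules whose defining paths both have their eventual vertices in one and the same strongly connected non-cyclic component; its associated primitive ideal being unique, the description of annihilators of Chen modules gives ${\rm Ann}(V_{[c_1^\infty]}) = {\rm Ann}(V_{[c_2^\infty]})$, so the map is not injective and (4) fails. I would also record the clean implication (4) $\Rightarrow$ (5): since the map is surjective, if it is a bijection then every simple $S$ shares its annihilator with the Chen module realizing that primitive ideal, and injectivity forces $S$ to be that Chen module.

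The hard part will be (5) $\Rightarrow$ (3), equivalently the construction, whenever some vertex carries two cycles, of a simple $L$-module that is \emph{not} a Chen module. My approach is to localize in the purely infinite simple subquotient $A = L/P$ in which $c_1, c_2$ persist as two distinct cycles through a common vertex (in the extreme case $E$ is a rose with two petals and $A = L_K(1,2)$). In $A$ every Chen module has annihilator $0$, and Chen's classification describes all of them by tail-equivalence classes of infinite paths, so it suffices to exhibit one simple $A$-module violating the structural feature common to them all, namely that some nonzero vector is supported on a single infinite boundary path under the ghost operators. I would build such a module as $A/\mathfrak{m}$ for a carefully chosen maximal left ideal $\mathfrak{m}$, arranging that the two cycle operators act with no common eigen-path, and then verify via Chen's isomorphism criterion that $A/\mathfrak{m}$ is isomorphic to no $V_{[p]}$. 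Producing this maximal left ideal and checking the non-isomorphism is where genuinely new input beyond the formal arguments above is needed, and it is the step I expect to be the main obstacle; the resulting non-Chen simple module witnesses the failure of (5) and completes the chain of equivalences.
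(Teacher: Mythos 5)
Your overall architecture (hub at (3), contrapositive for (2)$\Rightarrow$(3) via an aperiodic concatenation of the two cycles, surjectivity of $\widehat{L}\to\mathrm{Prim}(L)$ from the realization theorem) matches the paper, and the observation that under (3) the graph decomposes into pairwise disjoint exclusive cycles and sinks, so that every primitive quotient is of the form $M_N(K[x,x^{-1}]/(f))$ or $M_N(K)$, is a legitimate (and arguably cleaner) route to (4) and (5) than the paper's induction on the number of cycles. But there are two genuine gaps. The first is in (3)$\Rightarrow$(1): you dispose of the simple modules other than $V_{[c^\infty]}$ with ``the remaining modules admit a finite presentation because $E$ is finite.'' This is not an argument. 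The modules in question are $S\cong L/(M+Lf(c))$ where $M=I(E^0\setminus\{v\})$ and $f$ is irreducible; $L$ is typically non-noetherian here (e.g.\ the Jacobson algebra $K\langle x,y\mid yx=1\rangle$), so finite generation does not give finite presentation, and $M$ itself is not finitely generated as a left ideal. The entire computational core of the paper's Theorem \ref{Simple Fin Presented} is the verification that $Lf(c)+M=Lf(c)+N$ for the finitely generated left ideal $N$ generated by $E^0\setminus\{v\}$ and the elements $e_i^*(c^*)^j$, $0\le j\le n-1$, using the recursion obtained by left-multiplying $f(c)$ by $e_i^*(c^*)^{r+1}$. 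Note also that finite presentation over the simple artinian quotient $L/P$ says nothing about finite presentation over $L$, so the subquotient picture cannot substitute for this computation.

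The second gap is (5)$\Rightarrow$(3), where you yourself flag that the construction of the maximal left ideal $\mathfrak m$ and the verification that $A/\mathfrak m$ is not a Chen module are missing; this is precisely the substantive content of the implication, not a routine check. The paper does it concretely and rather differently: given two cycles $g,h$ based at $v$, it builds a simple $P[\ol{E}]$-module $M$ of dimension $|g^0\cup h^0|$ supported on the vertices of the two cycles, induces it to a finitely presented simple $L$-module $\widetilde M=L\otimes_{P[\ol{E}]}M$ via \cite[Lemma 5.7]{AB}, and then excludes $\widetilde M\cong V_{[p]}$ by comparing minimal lattices: if $\widetilde M$ were $V_{[q^\infty]}$ for a primitive closed path $q$, the minimal lattice would be $\bigoplus_i q_i^\infty K$ with each vertex space one-dimensional, forcing $q$ to be a cycle of length $|g^0\cup h^0|$, and then one of $\alpha_i^*,\beta_i^*$ kills $N$ but not $M$, a contradiction. (One must also rule out types (1) and (5) of Chen modules, which the paper does via the annihilator $I(H(g^\infty))$.) Your proposed route through a purely infinite simple quotient and an eigen-path obstruction could plausibly be made to work, but as written it is a statement of intent rather than a proof, and it also leaves unaddressed whether a non-Chen simple module over the quotient pulls back to a non-Chen simple module over $L$.
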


It is interesting to note, from the recent investigation done in \cite{AAJZ}, that
the algebras $L_{K}(E)$ appearing in the theorem are precisely the Leavitt path algebras having
finite Gelfand-Kirillov dimension.

\section{Preliminaries}

Before we proceed to set the basic definitions, let us remark that there is a 
lack of uniformity in the notation and terminology used in graph theory.
Even in the setting of Leavitt path algebras or graph C*-algebras, different authors often
use  different conventions regarding the basic concepts from graph theory.
In particular we advise the reader that the notation herewith will be the same as 
in \cite{AAS} but will differ substantially from the one used by Chen in \cite{C}.

A (directed) graph $E=(E^{0},E^{1},r,s)$ consists of two sets $E^{0}$ and
$E^{1}$ together with maps $r,s:E^{1}\rightarrow E^{0}$. The elements of
$E^{0}$ are called \textit{vertices} and the elements of $E^{1}$
\textit{edges}. We generally follow the notation, terminology and results from
\cite{AAS}. We outline some of the concepts and results that will be used in
this paper.

A vertex $v$ is called a \textit{sink} if it emits no edges, that is,
$s^{-1}(v)=\emptyset$, the empty set. The vertex $v$ is called a
\textit{regular vertex} if $s^{-1}(v)$ is finite and non-empty and $v$ is
called an \textit{infinite emitter} if $s^{-1}(v)$ is infinite. For each $e\in
E^{1}$, we call $e^{\ast}$ a ghost edge. We let $r(e^{\ast})$ denote $s(e)$,
and we let $s(e^{\ast})$ denote $r(e)$. A \textit{finite path} $\mu$ of length
$n>0$ is a finite sequence of edges $\mu=e_{1}e_{2}\cdot\cdot\cdot e_{n}$ with
$r(e_{i})=s(e_{i+1})$ for all $i=1,\cdot\cdot\cdot,n-1$. In this case
$\mu^{\ast}=e_{n}^{\ast}\cdot\cdot\cdot e_{2}^{\ast}e_{1}^{\ast}$ is the
corresponding ghost path. The set of all vertices on the path $\mu$ is denoted
by $\mu^{0}$. Any vertex $v$ is considered a path of length $0$.

Given an arbitrary graph $E$ and a field $K$, the \textit{Leavitt path }%
$K$\textit{-algebra }$L_{K}(E)$ is defined to be the $K$-algebra generated by
a set $\{v:v\in E^{0}\}$ of pairwise orthogonal idempotents together with a
set of variables $\{e,e^{\ast}:e\in E^{1}\}$ which satisfy the following conditions:

(1) \ $s(e)e=e=er(e)$ for all $e\in E^{1}$.

(2) $r(e)e^{\ast}=e^{\ast}=e^{\ast}s(e)$\ for all $e\in E^{1}$.

(3) (The ``CK-1 relations") For all $e,f\in E^{1}$, $e^{\ast}e=r(e)$ and
$e^{\ast}f=0$ if $e\neq f$.

(4) (The ``CK-2 relations") For every regular vertex $v\in E^{0}$,
\[
v=\sum_{e\in E^{1},s(e)=v}ee^{\ast}.
\]

A non-trivial path $\mu$ $=e_{1}\dots e_{n}$ in $E$ is \textit{closed} if $r(e_{n}%
)=s(e_{1})$, in which case $\mu$ is said to be based at the vertex $s(e_{1})$.
A closed path $\mu$ as above is called \textit{simple} provided it does not
pass through its base more than once, i.e., $s(e_{i})\neq s(e_{1})$ for all
$i=2,...,n$. The closed path $\mu$ is called a \textit{cycle based at} $v$ 
if $s(e_1)=v$ and it does not
pass through any of its vertices twice, that is, if $s(e_{i})\neq s(e_{j})$
for every $i\neq j$. 

When talking about a cycle we loosely understand the set of closed paths 
obtained by rotation of a representative of $c$. When talking about a cycle based at a vertex $v$ we
understand the closed path in this family which starts at $v$.

A cycle $c$ is called an \textit{exclusive cycle} if it
is disjoint with every other cycle; equivalently, no
vertex on $c$ is the base of a different cycle other than the rotate of $c$ based at $v$.
These cycles were termed  cycles without (K) in \cite{Ranga}.

An \textit{exit }for a path $\mu=e_{1}\dots e_{n}$ is an edge $e$ such that
$s(e)=s(e_{i})$ for some $i$ and $e\neq e_{i}$. A graph $E$ is said to satisfy
\textit{Condition (L)} if every cycle in $E$ has an exit.

If there is a path from vertex $u$ to a vertex $v$, we write $u\geq v$. A
subset $D$ of vertices is said to be \textit{downward directed }\ if for any
$u,v\in D$, there exists a $w\in D$ such that $u\geq w$ and $v\geq w$. A
subset $H$ of $E^{0}$ is called \textit{hereditary} if, whenever $v\in H$ and
$w\in E^{0}$ satisfy $v\geq w$, then $w\in H$. A hereditary set is
\textit{saturated} if, for any regular vertex $v$, $r(s^{-1}(v))\subseteq H$
implies $v\in H$.

A subset $S$ of $E^{0}$ is said to have the \textit{Countable Separation
Property} (CSP) with respect to a set $C$, if $C$ is a countable subset of
$E^{0}$ with the property that for each $u\in S$ there is a $v\in C$ such that
$u\geq v$.

We shall be using the following concepts and results from \cite{T} in our
investigation. A \textit{breaking vertex }of a hereditary saturated subset $H$
is an infinite emitter $w\in E^{0}\backslash H$ with the property that
$1\leq|s^{-1}(w)\cap r^{-1}(E^{0}\backslash H)|<\infty$. The set of all
breaking vertices of $H$ is denoted by $B_{H}$. For any $v\in B_{H}$, $v^{H}$
denotes the element $v-\sum_{s(e)=v,r(e)\notin H}ee^{\ast}$. Given a
hereditary saturated subset $H$ and a subset $S\subseteq B_{H}$, $(H,S)$ is
called an \textit{admissible pair.} The admissible pairs form a partially
ordered set under the relation $(H_{1},S_{1})\leq(H_{2},S_{2})$ if and only if
$H_{1}\subseteq H_{2}$ and $S_{1}\subseteq H_{2}\cup S_{2}$. Given an
admissible pair $(H,S)$, $I(H,S)$ denotes the ideal generated by
$H\cup\{v^{H}:v\in S\}$. It was shown in \cite{T} that the graded ideals of
$L_{K}(E)$ are precisely the ideals of the form $I(H,S)$ for some
admissibile pair $(H,S)$. Moreover, $L_{K}(E)/I(H,S)\cong L_{K}%
(E\backslash(H,S))$. Here $E\backslash(H,S)$ is the \textit{quotient graph of
}$E$ in which\textit{ }$(E\backslash(H,S))^{0}=(E^{0}\backslash H)\cup
\{v^{\prime}:v\in B_{H}\backslash S\}$ and $(E\backslash(H,S))^{1}=\{e\in
E^{1}:r(e)\notin H\}\cup\{e^{\prime}:e\in E^{1},r(e)\in B_{H}\backslash S\}$
and $r,s$ are extended to $(E\backslash(H,S))^{1}$ by setting $s(e^{\prime
})=s(e)$ and $r(e^{\prime})=r(e)^{\prime}$. Thus when $S=B_{H}$,
$E\backslash(H,B_{H})^{0}=E^{0}\backslash H$ and $E\backslash(H,B_{H}%
)^{1}=\{e\in E^{1}:r(e)\notin H\}$, so we can identify the graph $E\backslash
(H,B_{H})$ with the subgraph $E/H$ of $E$.

If $H$ is a hereditary saturated subset of $E^0$, $c$ is a cycle without exits
in $E\backslash(H, B_H)=E/H$, based at $v\in E^0\setminus H$, and $f(x)$ is a polynomial in $K[x,x^{-1}]$, 
we will denote by  $I(H,B_H, f(c))$ the ideal of $L_K(E)$ generated by $I(H,B_H)$ and $f(c)$. Here $f(c)$ is the element 
of $L_K(E)$ obtained by formally substituting $x$ by $c$, $x^{-1}$ by $c^*$ 
and the constant term $a_0$  by  $a_0v$ in the canonical expression of $f(x)$ as a polynomial in $x,x^{-1}$.

\section{Special types of simple modules}

Let $E$ be an arbitrary graph and $L=L_{K}(E)$. In this section, we introduce
three new classes of simple left modules over $L$ as an addition to the two
types of simple left modules $\mathbf{N}_{w}$ and $V_{[p]}$ defined by X.W.
Chen (\cite{C}) and described below. We call all these simple modules Chen
modules. We first give a description of the annihilators of these Chen modules.
Then we show that every primitive ideal of $L$ can be realized as the
annihilator ideal of some Chen module. All the Chen modules are shown to be
pairwise non-isomorphic. These results are utilized in the next section.

Given an infinite path $p=e_{1}e_{2}\cdot\cdot\cdot e_{n}\cdot\cdot\cdot$
\ and an integer $n\geq1$, Chen (\cite{C}) defines $\tau_{\leq n}%
(p)=e_{1}\cdot\cdot\cdot e_{n}$ and $\tau_{>n}(p)=e_{n+1}e_{n+2}\cdot
\cdot\cdot$ . Two infinite paths $p,q$ are said to be \textit{tail-equivalent}
if there exist positive integers $m,n$ \ such that $\tau_{>m}(p)=\tau_{>n}%
(q)$. This is an equivalence relation and the equivalence class of all paths
tail equivalent to an infinite path $p$ is denoted by $[p]$. An infinite path
$p$ is called a \textit{rational path} if $p=ggg\cdot\cdot\cdot$ where $g$ is
some (finite) closed path in $E$.

Given an infinite path $p$, Chen defines $V_{[p]}=%
{\textstyle\bigoplus\limits_{q\in\lbrack p]}}
Kq$, a $K$-vector space having $\{q:q\in\lbrack p]\}$ as a basis. $V_{[p]}$ is
made a left $L$-module by defining, for all $q\in\lbrack p]$ and all $v\in
E^{0}$, $e\in E^{1}$,

$v\cdot q=q$ or $0$ according as $v=s(q)$ or not;

$e\cdot q=eq$ or $0$ according as $r(e)=s(q)$ or not;

$e^{\ast}\cdot q=\tau_{>1}(q)$ or $0$ according as $q=eq^{\prime}$ or not.

In \cite{C}, Chen shows that under the above action of $L$, $V_{[p]}$ becomes
a simple left $L$-module.

Similarly, given a sink $w$ in the graph $E$, Chen defines $\mathbf{N}_{w}$ to
be the $K$-vector space having as its basis all the (finite) paths in $E$ that end in
$w$. By defining an action of $L$ on the basis elements of $\mathbf{N}_{w}$ in
the same fashion as was done for $V_{[p]}$ (with the addition that $e^{\ast
}\cdot w=0$ for all $e\in E^{1}$), he shows that $\mathbf{N}_{w}$ becomes a simple
left $L$-module.

Throughout this section, we shall use the following notation.

For $v\in E^{0}$, define
\[
M(v)=\{w\in E^{0}:w\geq v\}\text{ and }H(v)=E^{0}\setminus M(v).
\]
Similarly, if $p$ is an infinite path in $E$, we define
\[
M(p)=\{w\in E^{0}:w\geq v\text{ for some }v\in p^{0}\}\quad \text{ and }\quad 
H(p)=E^{0}\setminus M(p).
\]

Clearly both $M(v)$ and $M(p)$ are downward directed sets. Also, for any
vertex $v$ which is a sink or an infinite emitter, and for any infinite path
$p$, the sets $H(v)$ and $H(p)$ are hereditary saturated subsets of $E^{0}$.
If $v$ is a finite emitter, it might be that $H(v)$ is not saturated, and 
$v$ may belong to the saturation of $H(v)$. Note also that $H(p)=H([p])$, that
is, $H(p)$ does not depend on the tail-representative of $[p]$.

\begin{lemma}
\label{lem:annih-sink} If $w$ is a sink, then the annihilator of the simple
module $\mathbf{N}_{w}$ is $I(H(w),B_{H(w)})$.
\end{lemma}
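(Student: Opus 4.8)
The plan is to identify the annihilator of $\mathbf{N}_w$ with the graded ideal $I(H(w),B_{H(w)})$ by showing two containments. Since the right-hand side is a graded ideal given by an admissible pair, and since $w$ is a sink (so $H(w)=E^0\setminus M(w)$ is hereditary and saturated by the remark preceding the lemma), I would exploit the quotient isomorphism $L/I(H(w),B_{H(w)})\cong L_K(E/H(w))$ to reduce the problem to a graph in which the relevant structure is transparent.

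**First I would** establish $I(H(w),B_{H(w)})\subseteq \mathrm{Ann}(\mathbf{N}_w)$. For this it suffices to check that the generators of the ideal kill $\mathbf{N}_w$. A vertex $u\in H(w)$ satisfies $u\not\geq w$, so no path ending in $w$ can pass through $u$; hence $u$ acts as $0$ on every basis element (a path $\lambda$ with $s(\lambda)=u$ would give $u\geq w$), giving $u\cdot\mathbf{N}_w=0$. For a breaking vertex $v\in B_{H(w)}$, I would verify directly from the definition $v^{H(w)}=v-\sum_{s(e)=v,\,r(e)\notin H(w)}ee^*$ that $v^{H(w)}$ annihilates each basis path; the point is that the finitely many edges $e$ with $s(e)=v$ and $r(e)\notin H(w)$ account for exactly the ways a path ending in $w$ can leave $v$, so the $ee^*$ terms reassemble $v\cdot\lambda$. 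This is a short computation on the basis.

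**The reverse containment** $\mathrm{Ann}(\mathbf{N}_w)\subseteq I(H(w),B_{H(w)})$ is where the real work lies, and here I would pass to the quotient. Write $\overline L=L/I(H(w),B_{H(w)})\cong L_K(E/H(w))$. Because the ideal annihilates $\mathbf{N}_w$, the module structure descends to an $\overline L$-module, still simple. In the quotient graph $E/H(w)$ the surviving vertices are exactly $M(w)=\{u:u\geq w\}$, which is downward directed with $w$ a sink reachable from every vertex; I expect that in this graph $\overline L$ acts \emph{faithfully} on $\mathbf{N}_w$, i.e. $\mathrm{Ann}_{\overline L}(\mathbf{N}_w)=0$, which is precisely the statement $\mathrm{Ann}(\mathbf{N}_w)=I(H(w),B_{H(w)})$. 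To prove faithfulness I would take a nonzero element $\bar a\in\overline L$, write it in the standard Cohn–Leavitt normal form as a $K$-combination of monomials $\alpha\beta^*$ with $\alpha,\beta$ paths, and produce a basis element $\lambda\in\mathbf{N}_w$ on which $\bar a$ acts nontrivially, using that $w$ is a sink so that $\beta^*\cdot\lambda$ lands on genuine terminal paths and the CK-relations do not collapse the output.

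\textbf{The main obstacle} is this faithfulness step: controlling cancellation among the $\alpha\beta^*$ monomials when several of them hit the same basis vector of $\mathbf{N}_w$. The cleanest route is to choose, for a fixed monomial $\alpha\beta^*$ appearing in $\bar a$, a long enough path $\lambda$ ending in $w$ whose initial segment is exactly $\beta$, and to arrange that the ``ranges'' $\alpha\tau_{>|\beta|}(\lambda)$ of the surviving monomials are pairwise distinct basis vectors, so that no cancellation can occur; since $w$ is a sink and every vertex of $M(w)$ reaches $w$, such extending paths exist. A subtlety is that breaking vertices force one to work with the generators $v^{H(w)}$ rather than $v$ in the quotient, so I would make sure the normal-form monomials are taken in $\overline L=L_K(E/H(w))$ where the CK-2 relation holds only at regular vertices of the quotient; this is exactly the reduction the isomorphism $L/I(H,B_H)\cong L_K(E\backslash(H,B_H))$ affords, and it keeps the linear-independence argument honest.
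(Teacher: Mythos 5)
Your proof of the first inclusion and your overall strategy for the second --- pass to $F=E\backslash(H(w),B_{H(w)})$ via $L_K(E)/I(H(w),B_{H(w)})\cong L_K(F)$ and show that $\mathbf{N}_w$ is a faithful $L_K(F)$-module --- coincide with the paper's proof. The gap is in how you propose to establish faithfulness. You would pick, for a monomial $\alpha\beta^{*}$ occurring in a nonzero $\bar a$, a path $\lambda=\beta\lambda'$ ending at $w$ and ``arrange that the ranges $\alpha\tau_{>|\beta|}(\lambda)$ of the surviving monomials are pairwise distinct basis vectors, so that no cancellation can occur.'' That distinctness is precisely what fails in general: whenever two monomials of $\bar a$ are of the form $\alpha\beta^{*}$ and $(\alpha\delta)(\beta\delta)^{*}$ (distinct as normal-form monomials), they send every $\lambda$ extending $\beta\delta$ to the \emph{same} basis vector $\alpha\delta\tau_{>|\beta\delta|}(\lambda)$, so their contributions can cancel for every admissible choice of $\lambda$. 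Ruling out that all such collisions conspire to annihilate every basis element is essentially the content of the Cuntz--Krieger-type uniqueness theorem for graphs satisfying Condition (L); invoking the normal form of the algebra does not discharge it, since linear independence of the normal-form monomials in $L_K(F)$ does not by itself imply that the module $\mathbf{N}_w$ separates them. As written, the decisive step is asserted rather than proved.

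The paper closes exactly this gap with one observation and one citation: every vertex of $F$ connects to the sink $w$, so every cycle in $F$ has an exit, i.e.\ $F$ satisfies Condition (L); hence by Proposition 1 of \cite{AMMS} every nonzero ideal of $L_K(F)$ contains a vertex, and since each vertex of $F$ acts nontrivially on $\mathbf{N}_w$ (it connects to $w$), the annihilator $\overline{J}$ must be zero. If you prefer to avoid the combinatorics altogether, note that $\mathbf{N}_w\cong L_K(F)w$ as left modules (because $w$ is a sink, the paths ending at $w$ form a basis of $L_K(F)w$ and the actions agree), so $\mathrm{Ann}(\mathbf{N}_w)$ is the left annihilator of the nonzero two-sided ideal $L_K(F)wL_K(F)$, which vanishes because $F^{0}$ is downward directed and hence $L_K(F)$ is prime. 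Either route replaces the cancellation analysis you correctly flagged as the main obstacle.
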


\begin{proof}
Denote by $J$ the annihilator of $\mathbf{N}_{w}$. We first prove that
$I(H(w),B_{H(w)})\subseteq J$. Since $J$ is an ideal of $L(E)$, it suffices
to check that $H(w)$ and $\{v^{H(w)}:v\in B_{H(w)}\}$ annihilate
$\mathbf{N}_{w}$. If $v\in H(w)$ then $v$ does not connect to $w$ and thus
$vp=0$ for every path $p$ ending at $w$. If $v\in B_{H(w)}\subset
E^{0}\backslash H(w)$, let $p$ be a path in $E$ such that $s(p)=v$ and
$r(p)=w$. Let $e$ be the initial edge of $p$, say $p=ep_{1}$. Then $r(e)\notin
H(w)$, because $r(e)$ connects to $w$. It follows that
\[
v^{H(w)}p=(v-\sum_{f\in s^{-1}(v),r(f)\notin H}ff^{\ast})ep_{1}=(e-e)p_{1}=0.
\]
Therefore, we have shown the inclusion $I(H(w),B_{H(w)})\subseteq J$. In
order to show the reverse inclusion, consider the graph $F= E\backslash(H(w),B_{H(w)})$, 
and recall that $L(E)/I(H(w),B_{H(w)})\cong
L(F)$. We can look at $\mathbf{N}_{w}$ as a simple $L(F)$-module, and we have
to show that it is faithful as a $L(F)$-module. Write $\overline{J}$ for the
annihilator of $\mathbf{N}_{w}$ as a left $L(F)$-module. Obviously we have
$\overline{J}\cap F^{0}=\emptyset$, because every vertex in $F$ connects to
$w$. On the other hand, $F$ satisfies condition (L) as every vertex in $F$
connects to the sink $w$ and this means, by Proposition 1 of \cite{AMMS},
that every non-zero ideal of $L(F)$ contains a vertex. Consequently, $\overline
{J}=0$. This proves the result.
\end{proof}

Recall that a cycle $c$ is said to be an \textit{exclusive cycle} if no vertex
on $c$ is the base of a different cycle (other than the rotate of $c$ based at that vertex).

If an infinite path $p$ is tail-equivalent to the rational path $c^{\infty}$,
where $c$ is a cycle in $E$, we say that $p$ \textit{ends in a cycle}.

\begin{lemma}
\label{lem:annih-infpath} Let $p$ be an infinite path. Then

(1) \ If $p$ does not end in an exclusive cycle then the annihilator of
$V_{[p]}$ is $I(H(p),B_{H(p)})$.

(2) \ If $p$ ends in an exclusive cycle $c$ based at a vertex $v$, then the
annihilator of $V_{[p]}$ is
$I(H(p),B_{H(p)},(c-v))$.
\end{lemma}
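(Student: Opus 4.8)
The plan is to follow the two-step blueprint of Lemma~\ref{lem:annih-sink}: first show that the displayed ideal annihilates $V_{[p]}$, and then pass to the quotient graph $F = E\backslash(H(p),B_{H(p)}) = E/H(p)$, where, using $L(E)/I(H(p),B_{H(p)})\cong L(F)$, I analyze the annihilator $\overline{J}$ of $V_{[p]}$ viewed as an $L(F)$-module. For the first inclusion I would check the generators of $I(H(p),B_{H(p)})$ exactly as in the sink case: a vertex $w\in H(p)$ satisfies $w\cdot q=0$ since $s(q)\in M(p)$ for every $q\in[p]$, and for $v\in B_{H(p)}$ the only term of $\sum_{s(f)=v,\, r(f)\notin H(p)}ff^{*}$ acting nontrivially on $q$ is the one given by the initial edge of $q$ (which lies in the sum because its range is in $M(p)$), whence $v^{H(p)}\cdot q = q-q = 0$.

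In case (2) I must additionally verify $(c-v)\cdot V_{[p]}=0$. The key observation is that, since $c$ has no exit inside $F$ while every vertex occurring on a $q\in[p]$ lies in $M(p)$, the only element of $[p]$ based at $v$ is the purely periodic path $c^{\infty}$; as $c\cdot c^{\infty}=c^{\infty}=v\cdot c^{\infty}$ and $c\cdot q = v\cdot q = 0$ whenever $s(q)\neq v$, the element $c-v$ kills the whole basis. This yields $I(H(p),B_{H(p)},(c-v))\subseteq \operatorname{Ann}(V_{[p]})$.

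The core of the argument is the reverse inclusion, for which I would first record a dichotomy for $F$. If a cycle $d$ of $F$ has no exit in $F$, then $p$ must end in $d$, because every vertex of $F$ connects to the tail of $p$ through $F$-edges and from $d$ no $F$-edge leaves the cycle; moreover such a $d$ is necessarily exclusive in $E$. Conversely, if $p$ ends in an exclusive cycle $c$, an exit of $c$ in $F$ would return to $c$ and produce a second cycle through a vertex of $c$, so $c$ has no exit in $F$ and is, up to rotation, the unique exit-free cycle of $F$. Hence $F$ satisfies Condition~(L) exactly when $p$ does not end in an exclusive cycle. In case (1) this lets me copy the sink argument: by Proposition~1 of \cite{AMMS} every nonzero ideal of $L(F)$ contains a vertex, whereas $\overline{J}\cap F^{0}=\emptyset$ since each $u\in M(p)$ is the source of some $q\in[p]$ (prolong a path from $u$ into the tail of $p$) and thus acts nontrivially; therefore $\overline{J}=0$.

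Case (2) is where I expect the genuine difficulty. Here $F$ has the single exit-free cycle $c$ at $v$, and I would use the standard identification of the corner $vL(F)v$ with $K[x,x^{-1}]$ via $x\leftrightarrow c$, $x^{-1}\leftrightarrow c^{*}$. Since $vV_{[p]}=Kc^{\infty}$ is one-dimensional and both $c$ and $c^{*}$ act on $c^{\infty}$ as the identity, the $K[x,x^{-1}]$-action on $vV_{[p]}$ is evaluation at $x=1$, so $\overline{J}\cap vL(F)v$ is precisely the image of $c-v$. It then remains to upgrade this corner computation to the whole algebra, that is, to show that an ideal of $L(F)$ containing no vertex is generated by its intersection with $vL(F)v$; this controllability of ideals by the corner at the unique exit-free cycle (cf. the ideal theory of \cite{Ranga}) forces $\overline{J}=\langle c-v\rangle$, giving $\operatorname{Ann}(V_{[p]})=I(H(p),B_{H(p)},(c-v))$. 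Establishing this final controllability step, rather than the corner computation itself, is the main obstacle.
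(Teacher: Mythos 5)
Your argument coincides with the paper's up to the final step of case (2), and everything you actually carry out is sound: the verification that the generators of $I(H(p),B_{H(p)})$ (and, in case (2), the element $c-v$) annihilate $V_{[p]}$, the dichotomy that $F=E\backslash(H(p),B_{H(p)})$ satisfies Condition (L) precisely when $p$ does not end in an exclusive cycle, the Condition-(L) argument via \cite{AMMS} in case (1), and the corner computation identifying $vL(F)v$ with $K[x,x^{-1}]$ and $\overline{J}\cap vL(F)v$ with the ideal $(x-1)$ are all correct and consistent with the paper's (mostly implicit) reasoning.

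The gap is exactly where you flag it: you never establish that an ideal of $L(F)$ meeting $F^{0}$ trivially is generated by its intersection with the corner $vL(F)v$, and without that the reverse inclusion in case (2) is unproved. That ``controllability'' statement is true, but it is itself a substantial piece of the ideal theory of Leavitt path algebras, not something that can be waved at. The paper closes the argument with one extra observation you have not exploited: $\overline{J}$ is the annihilator of a \emph{simple} $L(F)$-module, hence a \emph{primitive} ideal of $L(F)$. Since $\overline{J}\cap F^{0}=\emptyset$ and $F$ has a unique cycle without exits (namely $c$), the classification of primitive ideals in \cite[Theorem 4.3]{Ranga} forces $\overline{J}=\langle f(c)\rangle$ for some irreducible $f\in K[x,x^{-1}]$; as $c-v\in\overline{J}$, irreducibility gives $f=x-1$, i.e.\ $\overline{J}=\langle c-v\rangle$. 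If you prefer to keep your corner-based route, you must actually prove the controllability claim (for instance by showing how to reduce an arbitrary nonzero element of an ideal containing no vertices to a nonzero element of $vL(F)v$), which amounts to reproving the relevant part of \cite{Ranga}; invoking primitivity is the shorter path.
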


\begin{proof}
(1) The proof is similar to the proof of Lemma \ref{lem:annih-sink}. We leave
the details to the reader.

(2) Proceeding as in the proof of Lemma \ref{lem:annih-sink}, we arrive at a
simple module $V_{[p]}$ over $L(F)$, where $F=E\backslash (H(p),B_{H(p)})$, and $F$ has a
unique cycle without exits, which is $c$. Moreover $F^{0}\cap\overline{J}=0$,
where $\overline{J}$ is the annihilator of $V_{[p]}$ in $L(F)$ and hence a
primitive ideal of $L(F)$. By Theorem 4-3 (iii) in \cite{Ranga}, there exists
an irreducible polynomial $f$ in $K[x,x^{-1}]$ such that $\overline{J}$ is the
ideal generated by $f(c)$. Since $c-v$ annihilates $V_{[p]}$, we
conclude that $f=x-1$.
\end{proof}

Next we wish to introduce new classes of simple modules, similar to the simple
modules $N_{w}$ and $V_{[p]}$.

Let $[q]$ be an infinite rational path with $q=c^{\infty}$ and $c=e_{1}%
e_{2}\cdots e_{n}$ a cycle based at $v$. In \cite{C} Chen defines, for $a\in
K^{\times}$, a certain simple $L(E)$ module $V_{[q]}^{a}$, which is the
twisted module $V_{[q]}^{\sigma}$, where $\sigma$ is the \textquotedblleft
gauge\textquotedblright\ automorphism of $L(E)$ sending $v$ to $v$ for $v\in
E^{0}$, $e$ to $e$ and $e^{\ast}$ to $e^{\ast}$ for $e\in E^{1}$ with $e\neq
e_{1}$, and $e_{1}$ to $ae_{1}$ and $e_{1}^{\ast}$ to $a^{-1}e_{1}^{\ast}$.
Denoting by $\ast$ the module operation in $V_{[q]}^{a}$, we have $c\ast
q=\sigma(c)c^{\infty}=acc^{\infty}=ac^{\infty}=aq$ and similarly $c_{i}\ast
c_{i}^{\infty}=ac_{i}^{\infty}$ for all rotates $c_{i}$ of $c$. Moreover,
$V_{[q]}^{a}$ is a simple module.

As a slight modification of the above construction, let $f(x)=1+a_{1}%
x+\cdots+a_{n}x^{n}$, $n\geq1$, be an irreducible polynomial in $K[x,x^{-1}]$
and let $c=e_{1}e_{2}\cdots e_{m}$ be an exclusive cycle. Set $q=c^{\infty}$.
We are going to define a new module $V_{[q]}^{f}$. Let $K^{\prime}%
=K[x,x^{-1}]/(f(x))$, which is a field because $f(x)$ is irreducible. Define a
$L_{K^{\prime}}(E)$-module by $M=V_{[q]}^{\overline{x}}$. Observe that $M$ is
well-defined because $\overline{x}$ is invertible in $K^{\prime}$, and that
$M$ is a simple $L_{K^{\prime}}(E)$-module. We denote by $V_{[q]}^{f}$ the
$L_{K}(E)$-module obtained by restricting scalars on $M$ from $L_{K^{\prime}%
}(E)$ to $L_{K}(E)$.

\begin{lemma}
\label{lem:also-simple-over-K} The $L_{K}(E)$-module $V_{[q]}^{f}$ is simple.
\end{lemma}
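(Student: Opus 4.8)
The plan is to deduce the simplicity of $V_{[q]}^{f}=M$ over $L_{K}(E)$ from the already-established simplicity of $M$ over $L_{K^{\prime}}(E)$, exploiting that $L_{K^{\prime}}(E)=L_{K}(E)\otimes_{K}K^{\prime}$ with $K^{\prime}$ sitting inside the centre. In particular $\overline{x}\in K^{\prime}$ acts on $M$ as an $L_{K}(E)$-module endomorphism, and by construction of the twisted module one has $c\ast q=\overline{x}q$ and $c^{\ast}\ast q=\overline{x}^{-1}q$; hence $\overline{x}^{i}q=c^{i}\ast q$ (and the negative powers via $c^{\ast}$) lie in $L_{K}(E)q$ for every $i\in\Z$. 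Since $K^{\prime}=K[\overline{x},\overline{x}^{-1}]$, this gives $K^{\prime}q\subseteq L_{K}(E)q$, and because $M=L_{K^{\prime}}(E)q$ by simplicity over $L_{K^{\prime}}(E)$ while $K^{\prime}$ is central, we obtain $M=L_{K}(E)q$. Thus $M$ is a cyclic $L_{K}(E)$-module generated by $q$, and it suffices to show that $q$ lies in the $L_{K}(E)$-submodule generated by an arbitrary nonzero $w\in M$.

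Write $w=\sum_{j=1}^{t}\lambda_{j}q_{j}$ with $\lambda_{j}\in K^{\prime}\setminus\{0\}$ and $q_{1},\dots,q_{t}\in[q]$ distinct. The key step is a \emph{separation} argument. Because $c$ is an exclusive cycle, no path may leave it and re-enter (this would produce a second cycle through a vertex of $c$), so each $q_{j}$ is eventually equal to a rotate of $c^{\infty}$; choose $N$ so large that $\tau_{>N}(q_{j})$ is periodic for all $j$, and set $\mu_{j}=\tau_{\le N}(q_{j})$. Since the periodic tail is uniquely determined by the cycle vertex at which it starts, distinct $q_{j}$ have distinct prefixes $\mu_{j}$, all of length $N$. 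Applying the ghost path $\mu_{1}^{\ast}$ therefore annihilates every term except $j=1$ and yields $\mu_{1}^{\ast}\ast w=\lambda\,\tau_{>N}(q_{1})$ for some $\lambda\in K^{\prime}\setminus\{0\}$, the scalar absorbing the gauge twist. The surviving vector $\tau_{>N}(q_{1})$ is a rotate $(c^{(k)})^{\infty}$ of $c^{\infty}$; prepending the complementary arc $e_{1}\cdots e_{k-1}$ of the cycle completes it to $c^{\infty}=q$, so for a suitable $\ell\in L_{K}(E)$ we get $\ell\ast w=\lambda^{\prime}q$ with $\lambda^{\prime}\in K^{\prime}\setminus\{0\}$.

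Finally, from $\lambda^{\prime}q\in L_{K}(E)w$ I apply the powers $c^{i}$ and $c^{\ast i}$ as in the first paragraph to produce $\lambda^{\prime}\overline{x}^{i}q$ for all $i\in\Z$; their $K$-span is $\lambda^{\prime}K^{\prime}q=K^{\prime}q$, so in particular $q\in L_{K}(E)w$, whence $L_{K}(E)w\supseteq L_{K}(E)q=M$ and $w$ generates $M$. This establishes simplicity. The main obstacle is the separation step: one must verify, using exclusivity of $c$, that for large $N$ the prefixes $\mu_{j}$ are distinct and of equal length so that a single ghost path isolates one basis vector, and one must track throughout the scalars in $K^{\prime}$ introduced by the gauge automorphism $\sigma$.
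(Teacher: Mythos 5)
Your proposal is correct and follows essentially the same strategy as the paper's proof: isolate a nonzero $K'$-multiple of $q=c^{\infty}$ from an arbitrary nonzero element by applying a suitable ghost path (the paper normalizes the prefixes as maximal-length paths not involving $e_{1}$, you truncate all summands at a common length $N$ — the same separation idea), then use $c\ast q=\overline{x}q$ to generate all of $K'q$ over $K$, and finally act by paths to recover the whole module. The minor reorganization (first proving $M=L_{K}(E)q$ and reducing to cyclicity) does not change the substance of the argument.
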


\begin{proof}
Let $U$ be a nonzero $L_{K}(E)$-submodule of $V_{[q]}^{f}$. We first claim that, 
given two nonzero scalars
$\lambda,\mu$ in $K^{\prime}$ such that  
$\lambda q\in U$, then also $\mu q\in U$. For this it clearly suffices to
prove that $(\overline{x}\lambda)q\in U$. 
We have
\[
c\ast (\lambda q)  =\sigma(c)(\lambda
c^{\infty})=(\overline{x}\lambda)q.
\]
which shows the claim. 
Next, we follow the proof of \cite[Theorem 3.3(1)]{C}. Let $u=\sum _{i=1}^l \lambda_i p_i$ be a nonzero element in $U$,
with $\lambda_i \in K'\setminus \{0\}$ and $p_i$ distinct paths in $[q]$. 
We can uniquely write
$p_i=p_{i}'c^{\infty}$, where $p_{i}'$ is a finite path (possibly of length $0$)
which does not involve $e_{1}$. We can assume that the length of $p_1'$ is larger than or equal to 
the maximum of the lengths of all the other paths $p_i'$, $i\ge 2$.   We get
$$ [(p_1')^*]\ast u = \lambda _1 c^{\infty}$$
because the path $p_1'$ does not involve the edge $e_1$ and has maximum length. 
Therefore $\lambda_1 q\in U$. Now let $\mu$ be an arbitrary nonzero element of $K'$, and $p_0\in [q]$.
We have $p_0= p_0'q$, with $p_0'$ a finite path not involving $e_1$. 
By the claim we have $\mu q\in U$, and so
$$\mu p_0= p_0' \ast (\mu q ) \in U.$$
It follows that $U=V_{[q]}^{f}$, showing the result.
\end{proof}

The module $V_{[q]}^{f}$ above has the appropriate annihilator, as follows.

\begin{lemma}
\label{lem:annih-f(x)} Let $f(x)=1+a_{1}x+\cdots+a_{n}x^{n}$, $n\geq1$, be an
irreducible polynomial in $K[x,x^{-1}]$ and let $c=e_{1}e_{2}\cdots e_{m}$ be
an exclusive cycle. Set $q=c^{\infty}$. Then the annihilator of $V_{[q]}^{f}$
is $I(H(q),B_{H(q)},f(c))$.
\end{lemma}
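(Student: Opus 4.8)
The plan is to run the two-inclusion strategy of Lemmas \ref{lem:annih-sink} and \ref{lem:annih-infpath}: first show that $I(H(q),B_{H(q)},f(c))$ annihilates $V_{[q]}^{f}$, then reduce modulo $I(H(q),B_{H(q)})$ to the quotient graph $F=E\backslash(H(q),B_{H(q)})$ and identify the remaining annihilator using the classification of primitive ideals from \cite{Ranga}.

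For the first inclusion, the graded part $I(H(q),B_{H(q)})$ annihilates $V_{[q]}^{f}$ by the computation already carried out in Lemma \ref{lem:annih-sink}: a vertex $u\in H(q)$ kills every basis path because no $w\in[q]$ can start at $u$ (as $H(q)$ is hereditary and every $w$ eventually meets the cycle $c$, whose vertices lie outside $H(q)$), and the breaking-vertex relations $u^{H(q)}$ vanish as before. The gauge twisting is irrelevant here, since $\sigma$ fixes each vertex and each element $ee^{*}$. For the generator $f(c)=v+a_{1}c+\cdots+a_{n}c^{n}$, note that $f(c)=vf(c)v$ is supported at $v$, so $f(c)\ast w=f(c)\ast(v\ast w)=0$ whenever $s(w)\neq v$. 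The exclusivity of $c$ forces $q=c^{\infty}$ to be the \emph{only} basis path with $s(w)=v$: any edge $f\notin\{e_{1},\dots,e_{m}\}$ emitted by a vertex of $c$ is an exit, and if its range connected back to $c$ we would obtain a second cycle meeting $c$, contradicting exclusivity; hence every such range lies in $H(q)$, and a path in $[q]$ can never take one of these edges, so it is forced around $c$ forever. Finally $f(c)\ast q=f(\overline{x})\,q=0$, using $c\ast q=\overline{x}\,q$ and $f(\overline{x})=0$ in $K'=K[x,x^{-1}]/(f(x))$. Thus $f(c)$ annihilates $V_{[q]}^{f}$.

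For the reverse inclusion I pass to $F=E\backslash(H(q),B_{H(q)})$ via $L(E)/I(H(q),B_{H(q)})\cong L(F)$ and view $V_{[q]}^{f}$ as a simple $L(F)$-module with annihilator $\overline{J}$, a primitive ideal of $L(F)$. As in Lemma \ref{lem:annih-infpath}(2), $F$ has a unique cycle without exits, namely $c$ (the exits of $c$ all land in $H(q)$ by the exclusivity argument above). I then check $\overline{J}\cap F^{0}=\emptyset$: every $u\in F^{0}=E^{0}\setminus H(q)$ satisfies $u\geq v_{j}$ for some vertex $v_{j}$ of $c$, so one builds an infinite path $w\in[q]$ with $s(w)=u$, whence $u\ast w=w\neq 0$. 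By Theorem 4-3(iii) of \cite{Ranga}, a primitive ideal of $L(F)$ meeting $F^{0}$ trivially is the ideal generated by $g(c)$ for a (unique up to associates) irreducible $g\in K[x,x^{-1}]$. Since $f(c)\in\overline{J}$ by the first part, the structure theory gives $g\mid f$, and as $f$ is irreducible this forces $g$ to be an associate of $f$, so $\overline{J}$ is generated by $f(c)$. Pulling back along $L(E)\to L(F)$ yields $\text{Ann}(V_{[q]}^{f})=I(H(q),B_{H(q)},f(c))$.

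The main obstacle I anticipate is the identification of $\overline{J}$ with the ideal generated by $f(c)$: one must argue both that $\overline{J}$ avoids every vertex and that the containment of ideals translates into divisibility $g\mid f$. The divisibility rests on the fact that, because $c$ has no exits in $F$, the corner $vL(F)v$ is isomorphic to $K[x,x^{-1}]$ with $c\mapsto x$, under which the ideals generated by the various $g(c)$ correspond to the ideals of $K[x,x^{-1}]$; this is the content of \cite{Ranga} that I would cite rather than reprove. A more self-contained alternative is to observe that $M=V_{[q]}^{\overline{x}}=V_{[q]}^{\sigma}$ over $K'$, so that $\text{Ann}_{L_{K'}(E)}(M)=\sigma^{-1}\bigl(\text{Ann}_{L_{K'}(E)}(V_{[q]})\bigr)=I(H(q),B_{H(q)},c-\overline{x}v)$ by Lemma \ref{lem:annih-infpath}(2) applied over $K'$, and then to restrict scalars to $K$; under the corner isomorphism this reduces to intersecting the ideal $(x-\overline{x})$ of $K'[x,x^{-1}]$ with $K[x,x^{-1}]$, which is exactly $(f(x))$. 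Either route isolates the same bookkeeping about vertex-avoidance and divisibility.
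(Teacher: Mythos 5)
Your proposal is correct and follows essentially the same route as the paper: the paper's proof simply invokes the reduction of Lemma \ref{lem:annih-infpath}(2) (pass to $F=E\backslash(H(q),B_{H(q)})$, check $\overline{J}\cap F^{0}=\emptyset$, apply Theorem 4-3(iii) of \cite{Ranga} to write $\overline{J}=(g(c))$ with $g$ irreducible) and then verifies $f(c)\ast c^{\infty}=\sigma(f(c))c^{\infty}=f(\overline{x})c^{\infty}=0$, exactly your two-inclusion argument. You merely spell out details the paper leaves implicit (the exclusivity argument forcing $q$ to be the only basis path based at $v$, and the divisibility step $g\mid f$); your closing alternative via restriction of scalars from $K'$ is a nice aside but not needed.
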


\begin{proof}
As in the proof of Lemma \ref{lem:annih-infpath}(2), we only have to show that
the annihilator $\overline{J}$ of $V^{f}_{[q]}$ in $L(F)$ contains $f(c)$,
where $F=E\backslash (H(q), B_{H(q)})$. But this follows from
\[
f(c)\ast c^{\infty} = \sigma(f(c)) c^{\infty}= f(\sigma(c))c^{\infty}=
f(\overline{x}) c^{\infty} = 0.
\]

\end{proof}

Let $v$ be an infinite emitter such that $v\in B_{H(v)}$.

Then we can build the primitive ideal $P=I(H(v),B_{H(v)}\setminus\{v\})$
(see \cite{Ranga}) and the factor ring
\[
L(E)/P\cong L(F)\,
\]
where $F=E\backslash (H(v),B_{H(v)}\setminus\{v\})$. Here $F^{0}=(E^{0}\setminus
H(v))\cup\{v^{\prime}\}$,
\[
F^{1}=\{e\in E^{1}:r(e)\notin H(v)\}\cup\{e^{\prime}:e\in E^{1},r(e)=v\}
\]
and $r$ and $s$ are extended to $F$ by setting $s(e^{\prime})=s(e)$ and
$r(e^{\prime})=v^{\prime}$ for all $e\in E^{1}$ with $r(e)=v$. Note that
$v^{\prime}$ is a sink in $F$. We claim that $M_{F}(v^{\prime})=F^{0}$. Since
$M_{E}(v)=E^{0}\setminus H(v)$, it suffices to show that $v$ connects to
$v^{\prime}$. Now, since $v\in B_{H(v)}$, there is $e\in E^{1}$ such that
$s(e)=v$ and $r(e)\in M_{E}(v)$. If $r(e)=v$ then $e^{\prime}\in F^{1}$ and
$s(e^{\prime})=v$, $r(e^{\prime})=v^{\prime}$. If $r(e)\neq v$, then, since
every vertex in $M_{E}(v)$ connects to $v$, there exists a path $p=f_{1}\cdots
f_{m}$ in $E$ such that $s(p)=r(e)$ and $r(p)=v$. Now $q:=ef_{1}f_{2}\cdots
f_{m-1}f_{m}^{\prime}$ is a path in $F$ such that $s(q)=v$ and $r(q)=v^{\prime
}$, as claimed.

Accordingly we may consider the simple module $\mathbf{N}_{v^{\prime}}$ of
$L(F)$ introduced by Chen corresponding to the sink $v^{\prime}$ in $F$. Now
$\mathbf{N}_{v^{\prime}}$ is a simple faithful $L(F)$-module by Lemma
\ref{lem:annih-sink}, because $M_{F}(v^{\prime})=F^{0}$. Using the quotient
map $L(E)\rightarrow L(F)$, we may view $\mathbf{N}_{v^{\prime}}$ as a simple
module over $L(E)$. We shall denote this $L(E)$-module by $\mathbf{N}%
_{v}^{B_{H(v)}}$.

\begin{lemma}
\label{lem:annih-inf-emitter} Assume that $v$ is an infinite emitter and that
$v\in B_{H(v)}$. Then the annihilator of $\mathbf{N}_{v}^{B_{H(v)}}$ is
precisely $I(H(v),B_{H(v)}\setminus\{v\})$.
\end{lemma}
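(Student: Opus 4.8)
The plan is to reduce the statement to the faithfulness fact recorded in the paragraph immediately preceding the lemma, using the standard behaviour of annihilators under restriction of scalars along a surjection. Write $P=I(H(v),B_{H(v)}\setminus\{v\})$ and let $\pi\colon L(E)\to L(F)$ be the quotient map, where $F=E\backslash(H(v),B_{H(v)}\setminus\{v\})$; its kernel is exactly $P$ by the isomorphism $L(E)/P\cong L(F)$ coming from Tomforde's description of graded ideals (this is the very identification used to set up $\mathbf{N}_v^{B_{H(v)}}$). By construction $\mathbf{N}_v^{B_{H(v)}}$ is the $L(F)$-module $\mathbf{N}_{v'}$ equipped with the $L(E)$-action $a\cdot n=\pi(a)\cdot n$.

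First I would record the elementary identity $\operatorname{Ann}_{L(E)}(\mathbf{N}_v^{B_{H(v)}})=\pi^{-1}\bigl(\operatorname{Ann}_{L(F)}(\mathbf{N}_{v'})\bigr)$. Indeed, an element $a\in L(E)$ kills every $n$ precisely when $\pi(a)\cdot n=0$ for all $n$, which is exactly the condition $\pi(a)\in\operatorname{Ann}_{L(F)}(\mathbf{N}_{v'})$. This step is purely formal.

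The crucial ingredient is that $\mathbf{N}_{v'}$ is faithful as an $L(F)$-module, which is what was already observed above. Since $v'$ is a sink of $F$ with $M_F(v')=F^0$, we have $H_F(v')=F^0\setminus M_F(v')=\emptyset$, and hence $B_{H_F(v')}=\emptyset$ (the empty set admits no breaking vertices, as any candidate would have to be an infinite emitter whose out-edge set is nonempty and finite). Therefore Lemma \ref{lem:annih-sink}, applied to the sink $v'$ in the graph $F$, yields $\operatorname{Ann}_{L(F)}(\mathbf{N}_{v'})=I(\emptyset,\emptyset)=0$.

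Combining the two ingredients gives $\operatorname{Ann}_{L(E)}(\mathbf{N}_v^{B_{H(v)}})=\pi^{-1}(0)=\ker\pi=P$, which is the asserted equality. I do not expect a genuine obstacle in this argument: the only points requiring attention are the annihilator-under-restriction-of-scalars identity (formal) and the equality $\ker\pi=P$, which is the content of the already-cited isomorphism $L(E)/P\cong L(F)$; everything else was prepared in the construction preceding the statement.
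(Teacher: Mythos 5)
Your proposal is correct and follows essentially the same route as the paper: the paper's one-line proof invokes precisely the faithfulness of $\mathbf{N}_{v'}$ over $L(F)$ (established just before the lemma via Lemma \ref{lem:annih-sink} and the fact that $M_F(v')=F^0$), so that the annihilator over $L(E)$ is the kernel of the quotient map, namely $I(H(v),B_{H(v)}\setminus\{v\})$. You have merely made explicit the formal steps (the restriction-of-scalars identity and the computation $B_{\emptyset}=\emptyset$) that the paper leaves implicit.
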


\begin{proof}
This follows from the fact that the simple $L(F)$-module $\mathbf{N}%
_{v^{\prime}}$ is faithful.
\end{proof}

Next, suppose $v$ is an infinite emitter such that $r(s^{-1}(v))\subseteq
H(v)$.

Then $v$ is the unique sink of $E\backslash (H(v),B_{H(v)})$. Let $\mathbf{N}%
_{v}$ be the corresponding simple $L_{K}(E\backslash(H(v),B_{H(v)}))$-module
introduced by Chen. It is clear that $\mathbf{N}_{v}$ is a faithful simple
$L_{K}(E\backslash(H(v),B_{H(v)}))$-module. Consider $\mathbf{N}_{v}$ as a
simple $L_{K}(E)$-module through the quotient map $L_{K}(E)\rightarrow
L_{K}(E\backslash(H(v),B_{H(v)}))$. We denote this as $\mathbf{N}_{v}^{H(v)}$.
The next lemma follows immediately.

\begin{lemma}
\label{lem:sink-in-the-quotient} Let $v$ be an infinite emitter and suppose
that $r(s^{-1}(v))\subseteq H(v)$. Then the annihilator of the simple module
$\mathbf{N}_{v}^{H(v)}$ is $I(H(v),B_{H(v)})$.
\end{lemma}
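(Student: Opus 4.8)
The plan is to recognize Lemma \ref{lem:sink-in-the-quotient} as a direct transfer of Lemma \ref{lem:annih-sink} through the quotient construction already set up in the paragraph preceding the statement, so that essentially no new work is required. First I would recall what has been established just above: when $v$ is an infinite emitter with $r(s^{-1}(v))\subseteq H(v)$, the saturation condition forces every edge out of $v$ to land in $H(v)$, so that $v$ becomes a sink in the quotient graph $E\backslash(H(v),B_{H(v)})$. Writing $F=E\backslash(H(v),B_{H(v)})$, I would note that $v$ is in fact the unique sink of $F$ (as asserted), and that $\mathbf{N}_v^{H(v)}$ is by definition the Chen module $\mathbf{N}_v$ of the sink $v$ in $F$, pulled back to $L_K(E)$ along the surjection $\pi\colon L_K(E)\to L_K(F)\cong L_K(E)/I(H(v),B_{H(v)})$.

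The key computation is then to identify the annihilator. Applying Lemma \ref{lem:annih-sink} to the graph $F$ and its sink $v$, the annihilator of $\mathbf{N}_v$ as an $L_K(F)$-module is $I(H_F(v),B_{H_F(v)})$, where $H_F(v)$ is computed inside $F$. Because every vertex of $F$ connects to the sink $v$ (this is exactly the reason $\mathbf{N}_v$ is faithful, as stated in the text: ``$\mathbf{N}_v$ is a faithful simple $L_K(E\backslash(H(v),B_{H(v)}))$-module''), one has $H_F(v)=\emptyset$ and $B_{H_F(v)}=\emptyset$, so the annihilator over $L_K(F)$ is the zero ideal. Hence $\mathbf{N}_v$ is faithful over $L_K(F)$, which is the crucial input.

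Finally I would transfer this back to $L_K(E)$. Since $\mathbf{N}_v^{H(v)}$ is obtained from the faithful $L_K(F)$-module $\mathbf{N}_v$ via the quotient map $\pi$, its annihilator as an $L_K(E)$-module is precisely the preimage under $\pi$ of the annihilator over $L_K(F)$; that is,
\[
\mathrm{Ann}_{L_K(E)}(\mathbf{N}_v^{H(v)})=\pi^{-1}(0)=\ker\pi=I(H(v),B_{H(v)}),
\]
using the isomorphism $L_K(E)/I(H(v),B_{H(v)})\cong L_K(F)$ recorded in the preliminaries. This yields the claimed annihilator. The step that must be handled with care, and which is really the only substantive point, is the verification that $\mathbf{N}_v$ is faithful over $L_K(F)$; but this is already supplied in the surrounding text and is itself an instance of Lemma \ref{lem:annih-sink} together with the observation that in $F$ every vertex connects to $v$. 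For that reason the proof is indeed immediate, and I would present it as a short deduction rather than a computation, exactly as the phrase ``The next lemma follows immediately'' anticipates.
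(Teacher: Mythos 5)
Your proposal is correct and follows exactly the route the paper intends: the paper gives no written proof beyond the remark that the lemma ``follows immediately'' from the preceding paragraph, where $v$ is identified as the unique sink of $F=E\backslash(H(v),B_{H(v)})$ and $\mathbf{N}_v$ is noted to be faithful over $L_K(F)$, so the annihilator over $L_K(E)$ is the kernel $I(H(v),B_{H(v)})$ of the quotient map. Your filling-in of the details (faithfulness via Lemma~\ref{lem:annih-sink} applied to $F$, using $H_F(v)=\emptyset$) is exactly the implicit argument.
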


\begin{definition}
\label{def:Chen-module} Let $E$ be an arbitrary graph and $K$ an arbitrary
field. By a \textit{Chen module} we mean a simple left $L_{K}(E)$-module of
one of the following types:

\begin{enumerate}
\item $\mathbf{N}_{w}$, where $w$ is a sink in $E$;

\item $\mathbf{N}_{v}^{B_{H(v)}}$, where $v$ is an infinite emitter such that
$v\in B_{H(v)}$;

\item $\mathbf{N}_{v}^{H(v)}$, where $v$ is an infinite emitter such that
$r(s^{-1}(v))\subseteq H(v)$;

\item $V_{[p]}$, where $p$ is an infinite path on $E$;

\item $V_{[q]}^{f}$, where $q=c^{\infty}$, $c$ is an exclusive cycle, and
$f(x)$ is an irreducible polynomial in $K[x,x^{-1}]$, with $f(x) \ne x-1$.
\end{enumerate}
\end{definition}

\begin{proposition}
 \label{prop:pairwise-non-iso}
 All simple modules listed in Definition \ref{def:Chen-module} are pairwise non-isomorphic. 
 \end{proposition}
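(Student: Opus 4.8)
The plan is to exploit the annihilator computations already carried out in Lemmas~\ref{lem:annih-sink} through~\ref{lem:sink-in-the-quotient}, together with intrinsic module-theoretic invariants that survive the passage to a quotient. The key observation is that two simple modules with distinct annihilators are automatically non-isomorphic, since the annihilator of a module is an isomorphism invariant. Thus a large part of the work reduces to comparing the ideals $I(H(w),B_{H(w)})$, $I(H(v),B_{H(v)}\setminus\{v\})$, $I(H(v),B_{H(v)})$, $I(H(p),B_{H(p)})$, and $I(H(q),B_{H(q)},f(c))$ attached to the five types, and checking that whenever two of the listed modules share the same annihilator they are nevertheless non-isomorphic for a more refined reason.

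First I would observe that the admissible pair $(H,S)$ is recoverable from the graded ideal $I(H,S)$, so two modules whose annihilators are \emph{graded} ideals with distinct admissible pairs cannot be isomorphic. This immediately separates modules of types (1), (2), (3), and (4)-not-ending-in-an-exclusive-cycle from one another whenever the underlying data (the vertex $w$, the infinite emitter $v$, or the hereditary saturated set $H(p)$) differ, because these data are encoded in the pair $(H,S)$. Within a single type one argues directly: distinct sinks $w$ give distinct $H(w)$; distinct infinite emitters $v$ give distinct $H(v)$ or distinct breaking-vertex data; and for type (4) one invokes Chen's theorem that $V_{[p]}\cong V_{[p']}$ if and only if $p$ and $p'$ are tail-equivalent, so distinct classes $[p]$ yield non-isomorphic modules.

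The genuinely delicate comparisons are those involving the non-graded annihilators $I(H(q),B_{H(q)},f(c))$ of type-(5) modules, and the overlap between a type-(4) module $V_{[p]}$ that ends in an exclusive cycle $c$ (whose annihilator is $I(H(p),B_{H(p)},(c-v))$ by Lemma~\ref{lem:annih-infpath}(2)) and a type-(5) module $V_{[q]}^{f}$ over the same cycle. Here the underlying graded pair $(H(q),B_{H(q)})$ may coincide, so annihilators alone do not separate them; what distinguishes the modules is the irreducible polynomial. Passing to the quotient $L(F)$ with $F=E\backslash(H(q),B_{H(q)})$, where $c$ becomes a cycle without exits, I would use Theorem~4-3 of \cite{Ranga}: the faithful simple $L(F)$-modules supported on $c$ are classified by irreducible polynomials in $K[x,x^{-1}]$, with $V_{[q]}^{f}$ corresponding to $f$ and $V_{[p]}$ (ending in $c$) corresponding to $x-1$. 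Since type-(5) modules are taken with $f(x)\neq x-1$, distinct polynomials give distinct annihilators in $L(F)$ and hence distinct annihilators in $L(E)$, separating all type-(5) modules from each other and from the relevant type-(4) module.

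The main obstacle I anticipate is the bookkeeping required to rule out \emph{all} cross-type coincidences, namely verifying that the hereditary saturated sets and breaking-vertex data genuinely differ across types (for instance, that a module of type (2), whose annihilator omits $v$ from $S$, can never coincide with one of type (3), whose annihilator includes the full $B_{H(v)}$, even for the same infinite emitter $v$). This is handled by noting that in type (2) the vertex $v^{H(v)}$ is \emph{not} in the annihilator whereas in type (3) it is, a distinction visible already at the level of the graded ideal. I would organize the proof as a finite case analysis over the ten unordered pairs of types, dispatching most cases by the annihilator invariant and reserving the polynomial-classification argument of \cite{Ranga} for the single subtle case of modules supported on a common exclusive cycle.
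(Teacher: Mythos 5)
Your proposal is correct and takes essentially the same route as the paper, whose proof is a one-line appeal to the annihilator computations of Lemmas \ref{lem:annih-sink}--\ref{lem:sink-in-the-quotient} together with Chen's isomorphism criteria \cite[Theorems 3.3(2), 3.7(3)]{C}; your case analysis simply spells out the details of that argument. The only point worth flagging is that the genuinely unavoidable use of Chen's theorem is the case of two type-(4) modules $V_{[p]}$, $V_{[p']}$ with $H(p)=H(p')$ but $[p]\neq[p']$, where annihilators cannot separate the modules --- which you do correctly identify and handle.
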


 \begin{proof}
  This follows from the computation of the annihilators of these modules and from \cite[Theorems 3.3(2), 3.7(3)]{C}.
   \end{proof}

We can now state the main result of this section.

\begin{theorem}
\label{thm:realizingprims} Let $E$ be an arbitrary graph and $K$ an arbitrary
field, and let $P$ be any primitive ideal of $L_{K}(E)$. Then there exists a
Chen simple module $S$ such that the annihilator of $S$ is $P$.
\end{theorem}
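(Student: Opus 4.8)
The plan is to use the classification of primitive ideals of $L_K(E)$ obtained in \cite{Ranga} and match each type of primitive ideal with one of the five families of Chen modules, whose annihilators have already been computed in Lemmas \ref{lem:annih-sink}--\ref{lem:sink-in-the-quotient}. According to \cite{Ranga}, a primitive ideal $P$ of $L_K(E)$ is determined by an admissible pair together with possibly a polynomial datum, and it falls into exactly one of the following situations. First, $P$ is graded, that is $P=I(H,S)$ for an admissible pair $(H,S)$, and the set $E^0\setminus H$ is downward directed and satisfies the Countable Separation Property with respect to the relevant structure; within this case one distinguishes according to whether the ``bottom'' of $E^0\setminus H$ is a sink, an infinite emitter, or the endpoint of an infinite path. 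Second, $P$ is non-graded, of the form $I(H,B_H,f(c))$ where $c$ is a cycle without exits in $E/H$ and $f$ is an irreducible polynomial.

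First I would treat the non-graded case, as it is the most rigid. Here $P=I(H,B_H,f(c))$ with $c$ an exclusive cycle based at some $v\in E^0\setminus H$ (exclusivity follows because $c$ has no exit in the quotient graph, so no vertex of $c$ is the base of a different cycle there). If $f(x)=x-1$ then $c-v$ generates the non-graded part, and by Lemma \ref{lem:annih-infpath}(2) the annihilator of $V_{[p]}$, where $p$ ends in $c$ and $H=H(p)$, is exactly $I(H(p),B_{H(p)},(c-v))=P$. If $f(x)\neq x-1$, I would normalize $f$ so that its constant term is $1$ (dividing by a unit of $K[x,x^{-1}]$, which does not change the generated ideal), and then invoke Lemma \ref{lem:annih-f(x)}: the module $V_{[q]}^{f}$ with $q=c^\infty$ has annihilator $I(H(q),B_{H(q)},f(c))$, and since $H(q)=H$ and $B_{H(q)}=B_H$ this equals $P$. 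This is precisely why family (5) excludes $f(x)=x-1$, which is already covered by family (4).

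Next I would dispose of the graded case $P=I(H,B_H)$ (one checks from \cite{Ranga} that a graded primitive ideal must have $S=B_H$, because the complement must be downward directed with CSP). Passing to $F=E\backslash(H,B_H)=E/H$, the ideal $P$ is the kernel of $L_K(E)\to L_K(F)$, and $F^0=E^0\setminus H$ is downward directed with CSP. The classification then says that the ``sink-like'' bottom of $F$ is of exactly one of three kinds, matching Definition \ref{def:Chen-module}(1)--(4). If there is a genuine sink $w$ in $F$ to which every vertex connects, then by Lemma \ref{lem:annih-sink} the annihilator of $\mathbf{N}_w$ is $I(H(w),B_{H(w)})=P$. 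If the bottom is an infinite emitter $v$ with $r(s^{-1}(v))\subseteq H$, then $v$ is a sink of $F$ and Lemma \ref{lem:sink-in-the-quotient} gives annihilator $I(H(v),B_{H(v)})=P$ for $\mathbf{N}_v^{H(v)}$. If instead $v$ is an infinite emitter with $v\in B_{H(v)}$ (so that removing the $H$-directed edges still leaves $v$ connecting into the complement), then Lemma \ref{lem:annih-inf-emitter} realizes $P=I(H(v),B_{H(v)}\setminus\{v\})$ as the annihilator of $\mathbf{N}_v^{B_{H(v)}}$. The remaining graded subcase is when the complement has no sink-like bottom but the downward-directedness is witnessed by an infinite path $p$ that does not end in an exclusive cycle; then Lemma \ref{lem:annih-infpath}(1) gives annihilator $I(H(p),B_{H(p)})=P$ for $V_{[p]}$.

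The main obstacle I expect is verifying that the dichotomy in the classification of \cite{Ranga} maps bijectively and exhaustively onto the five Chen families, i.e. that for a given primitive ideal $P$ the associated downward-directed set $E^0\setminus H$ really does terminate in one of these four graded configurations (sink, infinite emitter of either breaking type, or an infinite non-cyclic path) together with the correct $S$. Concretely one must show that when $P$ is graded the downward-directed complement, under the CSP, forces a ``cofinal'' object of exactly the right type, so that the corresponding $H$, $B_H$, and $S$ coincide with $H(w)$, $H(v)$, or $H(p)$ and the prescribed set of breaking vertices. Once this matching is established the proof is just an assembly of the annihilator computations already proved, so I would organize the argument as a case analysis indexed by the classification in \cite{Ranga}, citing Theorem 4-3 there for the precise trichotomy between the graded sink/emitter/path cases and the non-graded polynomial case.
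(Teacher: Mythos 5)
Your overall strategy is exactly the paper's: invoke the trichotomy of \cite[Theorem 4.3]{Ranga} for primitive ideals and match each case to a Chen module whose annihilator was computed in Lemmas \ref{lem:annih-sink}--\ref{lem:sink-in-the-quotient}. The non-graded case and the breaking-vertex case go through essentially as you describe (your explicit separation of $f=x-1$, handled by $V_{[p]}$ via Lemma \ref{lem:annih-infpath}(2), from $f\ne x-1$, handled by $V_{[q]}^{f}$, is a correct reading of why Definition \ref{def:Chen-module}(5) excludes $x-1$). However, the step you flag as ``the main obstacle'' --- showing that for a graded primitive ideal $I(H,B_H)$ of type (iii) the downward directed set $F^0$, where $F=E\backslash(H,B_H)$, actually terminates in a unique sink to which everything connects, or else equals $M(p)$ for an infinite path $p$ not ending in an exclusive cycle --- is the only genuinely non-formal part of the whole proof, and you leave it unresolved. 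This is a real gap, not a routine verification: downward directedness alone does not produce such a cofinal object, and it is precisely here that the Countable Separation Property is used. The paper's argument enumerates the countable separating set $S=\{v_1,v_2,\dots\}$ (repeating it cyclically if finite), and, following \cite[Theorem 3.5]{ABR}, inductively builds nested paths $\lambda_1,\lambda_2,\dots$ in $F$ with $\lambda_i$ an initial segment of $\lambda_j$ for $i\le j$, length of $\lambda_i$ at least $i$, and $v_i\ge r(\lambda_i)$; their union is the required infinite path $p$ with $F^0=M(p)$. One then observes separately that Condition (L) on $F$, together with $F^0=M(p)$, rules out $p$ ending in an exclusive cycle, so that Lemma \ref{lem:annih-infpath}(1) applies. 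Without this construction your case (iii) is incomplete.

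A smaller point: your parenthetical claim that a graded primitive ideal must have $S=B_H$ is false --- type (ii) ideals are $I(H,B_H\setminus\{u\})$ with $u\in B_H$ --- and it contradicts your own subsequent treatment of the breaking-vertex case. Since you do in fact handle that case via Lemma \ref{lem:annih-inf-emitter}, nothing is lost, but the case division should be stated as in \cite{Ranga}: either $P=I(H,B_H\setminus\{u\})$ for a breaking vertex $u$ with $M(u)=E^0\setminus H$, or $P=I(H,B_H)$ with the quotient graph downward directed, satisfying Condition (L) and the CSP.
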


\begin{proof}
Let $P$ be a primitive ideal of $L(E)$, and set $H=P\cap E^{0}$. By
\cite[Theorem 4.3]{Ranga}, $P$ satisfies one of the following:

\begin{enumerate}
\item[(i)] $P=I(H,B_{H},f(c))$, where $c$ is a exclusive cycle based at a
vertex $u$, $E^{0}\setminus H=M(u)$, and $f(x)$ is an irreducible polynomial
in $K[x,x^{-1}]$.

\item[(ii)] $P$ is a graded ideal of the form $I(H,B_{H}\setminus\{u\})$,
where $u\in B_{H}$, and $M(u)=E^{0}\setminus H$;

\item[(iii)] $P$ is a graded ideal of the form $I(H,B_{H})$, and
$E\backslash(H,B_{H})$ is downward directed, satisfies the condition (L) and the
countable separation property.
\end{enumerate}

Suppose that (i) holds. Write $q=c^{\infty}$. Then $H=H(q)=E^{0}\setminus
M(u)$ and so, by Lemma \ref{lem:annih-f(x)}, the annihilator of $V_{[q]}^{f}$
is precisely $I(H,B_{H},f(c))=P$.

Next assume that (ii) holds. Now $u$ is an infinite emitter with $u\in
B_{H(u)}$ and, since $M(u)=E^{0}\setminus H$, we must have $H(u)=H$. Thus the
annihilator of $\mathbf{N}_{u}^{B_{H(u)}}$ is $P=I(H,B_{H}\setminus\{u\})$,
by Lemma \ref{lem:annih-inf-emitter}.

Finally, suppose that (iii) holds. Let $S$ be a countable (finite or infinite)
subset of $E^{0}\setminus H$ such that every vertex of $E\backslash(H,B_{H})$
connects to some vertex of $S$. We claim that either there is a (unique) sink in
$E\backslash(H,B_{H})$ to which all the vertices in $E\backslash(H,B_{H})$
connect, or else there exists an infinite path $p$ on $E\backslash (H,B_{H})$
such that each vertex in $E\backslash (H,B_{H})$ connects to a vertex in $p^{0}$. 
To see this, set $F:=E\backslash(H,B_{H})$.

If $F$ has a sink $v$, then since $F^{0}$ is downward directed, $v$ is a
unique sink, $v\in S$ and $F^{0}=M(v)$.

Assume that $F$ does not have any sink. If $S$ is infinite, let $v_{1}
,v_{2},v_{3},\dots$ be an enumeration of the elements of $S$. If
$S=\{v_{1},\dots,v_{k}\}$ is finite, we consider the infinite sequence
$v_{1},\dots,v_{k},v_{1},\dots,v_{k},v_{1},\dots,v_{k},\dots$ obtained by
repeating the finite sequence $v_{1},\dots,v_{k}$ infinitely many times, and
we write $v_{kr+i}=v_{i}$. Proceeding as in the proof of Theorem 3.5 in
\cite{ABR}, we may inductively define a sequence $\lambda_{1},\lambda
_{2},\dots,$ of paths in $F$ such that

\begin{enumerate}
\item $\lambda_{i}$ is an initial segment of $\lambda_{j}$ whenever $i\le j$,

\item The length of $\lambda_{i}$ is $\ge i$ for all $i$, and

\item $v_{i} \ge r(\lambda_{i})$ for all $i$.
\end{enumerate}

Now, we may use the paths $\lambda_{i}$ to build an infinite path $p$ such
that each vertex of $S$ connects to a vertex in $p^{0}$.

Therefore every vertex of $F$ connects to a vertex in $p^{0}$, as claimed.

We also note that when the path $p$ constructed above is a rational infinite
path $ggg\cdot\cdot\cdot$, where $g$ is a closed path, Condition (L) on $F$
together with the condition that every vertex connects to $g^0$ imply 
that $g$ cannot be an exclusive cycle.

If there is a unique sink $v$ in $E\backslash(H,B_{H})$ to which all the
vertices in $E\backslash(H,B_{H})$ connect, then $H=H(v)$, and there are two
possibilities: either $v$ is a sink in $E$, or else $v$ is an infinite emitter
such that $r(s^{-1}(v))\subseteq H(v)$. In either case, by Lemmas
\ref{lem:annih-sink} and \ref{lem:sink-in-the-quotient}, the annihilator of
$\mathbf{N}_{v}$, respectively of $\mathbf{N}_{v}^{H(v)}$, is precisely
$P=I(H(v),B_{H(v)})$.

If there is an infinite path $p$ on $E\backslash(H,B_{H})$ such that
$(E\backslash(H,B_{H}))^{0}=M(p)$, consider $p$ as an infinite path in the
graph $E$. Then $H=H(p)$ and, since $E\backslash(H,B_{H})$ has Condition (L),
$p$ cannot end in an exclusive cycle in $E$. Hence, it follows from Lemma
\ref{lem:annih-infpath}(1) that the annihilator of $V_{[p]}$ is precisely
$P=I(H,B_{H})$.

This concludes the proof of the theorem.
\end{proof}

From the proof of the above theorem dealing with condition (iii), we get the
following sharper reformulation of \cite[Theorem 5.7]{ABR},
characterizing primitive Leavitt path algebras.

\begin{theorem}
(\cite{ABR}) Let $E$ be an arbitrary graph and $K$ be any field. Then the
Leavitt path algebra $L_{K}(E)$ is primitive if and only if $E$ contains
either a sink $w$ or an infinite path $p$ that does not end in a cycle without exits, 
such that $E^{0}=M(w)$ or $M(p)$.
\end{theorem}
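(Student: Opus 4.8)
The plan is to reduce the statement to the structure theorem for primitive ideals, \cite[Theorem 4.3]{Ranga}, applied to the zero ideal, and then to read off the graph-theoretic description from the analysis already carried out for condition (iii) in the proof of Theorem \ref{thm:realizingprims}. Recall that $L_K(E)$ is primitive precisely when $0$ is a primitive ideal, i.e.\ when $L_K(E)$ admits a faithful simple module. So I would begin by setting $P=0$ in \cite[Theorem 4.3]{Ranga}. Since $H:=P\cap E^0=\emptyset$ and $\emptyset$ has no breaking vertices (an infinite emitter cannot meet the finiteness requirement defining $B_\emptyset$, so $B_\emptyset=\emptyset$), cases (i) and (ii) are impossible: the ideal in case (i) contains the nonzero element $f(c)$, and case (ii) requires a vertex $u\in B_H=\emptyset$. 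Hence $L_K(E)$ is primitive if and only if $0=I(\emptyset,\emptyset)$ falls under case (iii); that is, if and only if $E=E\setminus(\emptyset,\emptyset)$ is downward directed, satisfies Condition (L), and has the Countable Separation Property.

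For the direction from primitivity to the graph condition, I would specialize the construction in the proof of Theorem \ref{thm:realizingprims} (case (iii)) to $F=E$. That argument produces, from the three properties above, either a unique sink $w$ with $E^0=M(w)$ or an infinite path $p$ with $E^0=M(p)$. Since $E$ satisfies Condition (L) it contains no cycle without exits at all, so in the second alternative $p$ \emph{a fortiori} does not end in a cycle without exits. This yields exactly the asserted dichotomy.

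For the converse, I would show directly that the stated graph condition forces the three properties of case (iii), and then invoke \cite[Theorem 4.3]{Ranga} once more to conclude that $0$ is primitive. Downward directedness is immediate because $M(w)$ and $M(p)$ are downward directed and here equal $E^0$, and the Countable Separation Property holds with respect to the countable set $\{w\}$, respectively $p^0$. The real content is Condition (L): assuming a cycle $d$ without exits, pick $u\in d^0$; as $d$ has no exit, the only vertices reachable from $u$ are those of $d^0$. In the sink case this contradicts $u\geq w$, since the sink $w$ emits nothing and so lies off $d$; in the path case it forces some vertex of $p^0$ to lie on $d$, whence $p$ must follow $d$ forever and therefore ends in the cycle $d$ without exits, against the hypothesis. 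Thus Condition (L) holds and the converse follows. (In the sink case one can even produce the faithful module by hand: by Lemma \ref{lem:annih-sink} the annihilator of $\mathbf{N}_w$ is $I(\emptyset,\emptyset)=0$.)

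The step I expect to be the main obstacle is reconciling the two different cycle conditions in the converse. The hypothesis is phrased in terms of \emph{cycles without exits} (equivalently, Condition (L)), whereas faithfulness of $V_{[p]}$ in Lemma \ref{lem:annih-infpath} is governed by the strictly weaker notion of an \emph{exclusive cycle}: if $p$ ended in an exclusive cycle $c$ that happens to possess an exit, then the annihilator of $V_{[p]}$ would contain $c-v\neq 0$ and $V_{[p]}$ would fail to be faithful, so one cannot simply exhibit $V_{[p]}$ as the required faithful module. The resolution is precisely to pass through the primitive-ideal characterization rather than constructing the faithful simple module by hand: the key sub-step is the implication ``$E^0=M(p)$ and $p$ does not end in a cycle without exits $\Rightarrow$ Condition (L)'' established above, which lets \cite[Theorem 4.3]{Ranga}(iii) do the remaining work uniformly in both the sink and the path cases.
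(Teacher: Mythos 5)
Your proposal is correct and follows essentially the same route as the paper: the theorem is stated there as a direct consequence of the case (iii) analysis in the proof of Theorem \ref{thm:realizingprims} (with $H=\emptyset$ and $B_{\emptyset}=\emptyset$), which is precisely your reduction via \cite[Theorem 4.3]{Ranga} applied to $P=0$. Your explicit verification of the converse direction (downward directedness, CSP, and in particular that the stated hypotheses force Condition (L)) supplies details the paper leaves implicit, but it is the intended argument.
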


Given a graph $E$ and a hereditary subset $H$, one can construct the
\textit{restricted graph }$E_{H}$ as follows:%

\[
(E_{H})^{0}=H\text{ and }(E_{H})^{1}=\{e\in E^{1}:s(e)\in H\}.
\]

We now state a result on Chen modules that will be useful in next section.

\begin{lemma}
\label{lem:Morita-inv-Chenmods} Let $E$ be an arbitrary graph and let $H$ be a
hereditary subset of $E^{0}$. Consider the idempotent $e=\sum_{w\in H}w$ in
the multiplier algebra $M(L_{K}(E))$ of $L_{K}(E)$. Then $L_{K}(E_{H})$ is
isomorphic to $eL_{K}(E)e$. Moreover, if $M$ is a Chen simple $L_{K}(E_{H}%
)$-module of one the types (1)--(5) described in Definition
\ref{def:Chen-module}, then $L_{K}(E)e\otimes_{eL_{K}(E)e}M$ is also a Chen
simple $L_{K}(E)$-module of the same type.
\end{lemma}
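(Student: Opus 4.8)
The plan is to prove the two assertions separately: first the corner isomorphism $L_K(E_H)\cong eL_K(E)e$, and then a concrete analysis of the induction functor $T(-)=L_K(E)e\otimes_{eL_K(E)e}(-)$. For the ring isomorphism the crucial observation is that $H$ being hereditary forces every edge emitted from a vertex of $H$ to range inside $H$; hence for $v\in H$ one has $s^{-1}_{E_H}(v)=s^{-1}_E(v)$, so $v$ is regular in $E_H$ exactly when it is regular in $E$ and the (CK-2) relation at $v$ is literally the same in both algebras. I would therefore define $\psi\colon L_K(E_H)\to L_K(E)$ on generators by $w\mapsto w$, $f\mapsto f$, $f^*\mapsto f^*$ for $w\in H$, $f\in (E_H)^1$; all the relations of $L_K(E_H)$ are preserved, so $\psi$ is a well-defined graded homomorphism with image inside $eL_K(E)e$. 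Surjectivity is immediate once one notes that $e\,\mu\nu^*\,e=\mu\nu^*$ exactly when $s(\mu),s(\nu)\in H$, i.e.\ when $\mu,\nu$ are paths in $E_H$, and injectivity follows from the Graded Uniqueness Theorem (see \cite{AAS}) since $\psi$ is graded and $\psi(w)=w\neq 0$ for all $w\in H$. (As $H$ may be infinite, $e=\sum_{w\in H}w$ lives in $M(L_K(E))$, but $eL_K(E)e\subseteq L_K(E)$, so no analytic difficulties arise.)

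The heart of the module statement is an explicit description of $L_K(E)e$ as a right $L_K(E_H)$-module. Call a finite path $\mu$ in $E$ an \emph{entry path} if $r(\mu)\in H$ and $\mu$ meets $H$ only at $r(\mu)$, allowing the degenerate case where $\mu$ is a single vertex of $H$. Since $H$ is hereditary, any path of $E$ that ever reaches $H$ factors uniquely as $\mu\rho$ with $\mu$ an entry path and $\rho$ a path in $E_H$. Applying this to the monomials $\mu\nu^*$ (with $\nu$ a path in $E_H$ and $\mu$ a path of $E$ ending in $H$) that span $L_K(E)e$, I would prove
\[
L_K(E)e=\bigoplus_{\mu\ \mathrm{entry}}\mu\,L_K(E_H).
\]
Directness is the point: no entry path is a proper initial segment of another, so $\mu^*\mu'=\delta_{\mu,\mu'}\,r(\mu)$, and applying $\mu^*$ isolates each summand. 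Each summand is isomorphic to $r(\mu)L_K(E_H)$, so tensoring with $M$ yields the basic structural formula
\[
T(M)\ \cong\ \bigoplus_{\mu\ \mathrm{entry}}r(\mu)M,\qquad \mu\otimes m\ \longleftrightarrow\ m\in r(\mu)M.
\]

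Finally I would identify $T(M)$ with the Chen module $N$ over $E$ of the same type as $M$. First I must check that the defining data transfer from $E_H$ to $E$: a sink (resp.\ infinite emitter) of $E_H$ is a sink (resp.\ infinite emitter) of $E$; an exclusive cycle of $E_H$ stays exclusive in $E$, since a competing cycle through one of its vertices would lie in $H$ by heredity and hence already sit in $E_H$; and, using $M_{E_H}(v)=M_E(v)\cap H$ together with the fact that every edge out of $v\in H$ ranges in $H$, one sees that the breaking-vertex conditions $v\in B_{H(v)}$ and $r(s^{-1}(v))\subseteq H(v)$ are inherited by $E$. In each case $eN\cong M$ as $L_K(E_H)$-modules, and the natural basis of $N$ (finite paths ending at the sink, or infinite paths tail-equivalent to $q$, and so on) factors uniquely at the first entry into $H$, giving a bijection onto the basis of $\bigoplus_\mu r(\mu)M$ above. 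The inclusion $M\cong eN\subseteq N$ induces by adjunction the $L_K(E)$-linear map $\Phi\colon T(M)\to N$, $\mu\otimes m\mapsto \mu\cdot m$, which is then a bijection on bases; in type (5) one notes that entry paths never involve the twisted edge $e_1$ (which lies in the cycle, hence in $H$), so $\Phi$ respects the twisted action as well. Thus $T(M)\cong N$, a Chen module of the same type, and in particular simple by the results of this section.

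The main obstacle is the simplicity of $T(M)$. For a general ring with local units and idempotent $e$, the functor $Le\otimes_{eLe}(-)$ does \emph{not} preserve simplicity: the canonical map $Le\otimes_{eLe}eN\to N$ onto a simple $N$ with $eN\neq 0$ may have a nonzero radical. Simplicity here is therefore not formal and rests precisely on the direct-sum decomposition of $L_K(E)e$ above, which shows that $T(M)$ carries an honest path basis with no further collapse. A secondary, more technical obstacle is the bookkeeping in types (2) and (3), where $M$ is built by passing to a quotient graph $E_H\backslash(H(v),\cdot)$ and taking a sink module there; one must match the breaking vertices and quotient graphs of $E_H$ and $E$ carefully before the entry-path factorization can be applied.
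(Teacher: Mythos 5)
Your proposal is correct and follows essentially the same route as the paper: the corner isomorphism $L_K(E_H)\cong eL_K(E)e$ on the monomials $pq^*$ with $s(p),s(q)\in H$, followed by the unique factorization of paths at their first entry into $H$, which is exactly the paper's normal form $\sum_i\gamma_i\otimes a_i$ with the $\gamma_i$ meeting $H$ only at their range. Your packaging of that normal form as a direct-sum decomposition $L_K(E)e=\bigoplus_{\mu}\mu\,L_K(E_H)$ over entry paths, and your explicit checks that sinks, infinite emitters, exclusive cycles and the breaking-vertex conditions transfer from $E_H$ to $E$, merely make explicit what the paper leaves as ``a simple matter.''
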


\begin{proof}
The algebra $eL_{K}(E)e$ is linearly generated by all the paths $pq^{\ast}$,
where $p,q$ are paths in $E$ such that $s(p),s(q)\in H$. Using this, it is
easy to show that there is a graded isomorphism $L_{K}(E_{H})\rightarrow
eL_{K}(E)e$, which sends the vertices in $E_{H}^{0}=H$ to the same vertices in
$E^{0}$ and the edges in $(E_{H})^{1}$ to the corresponding edges in $E$.

Write $R=L_{K}(E)$ and $S=L_{K}(E_{H})$. If $M=V_{[p]}^{E_{H}}$ is a simple
$L_{K}(E_{H})$-module given by an infinite path $p$ on $E_{H}$, then we have
an isomorphism
\[
Re\otimes_{eRe}V_{[p]}^{E_{H}}\cong V_{[p]}%
\]
sending $\gamma\eta^{\ast}\otimes q$ to $\gamma\eta^{\ast}q$, where $\gamma$
and $\eta$ are finite paths in $E$ such that $s(\eta)\in H$, and $q$ is an
infinite path tail-equivalent to $p$ starting at a vertex of $H$. It is easily
checked that this map is surjective. To see that it is injective, observe that
we can write any element in $Re\otimes_{eRe}V_{[p]}^{E_{H}}$ in the form
$\sum_{i=1}^{r}\gamma_{i}\otimes a_{i}$, where $a_{i}\in V_{[p]}^{E_{H}}$, and
$\gamma_{i}$ are distinct paths in $E$ such that $r(\gamma_{i})\in H$ and all
the other vertices of the path $\gamma_{i}$ do not belong to $H$. If
$\sum_{i=1}^{r}\gamma_{i}a_{i}=0$, then we get $a_{i}=0$ for all $i$, and so
$\sum_{i=1}^{r}\gamma_{i}\otimes a_{i}=0$.

It is a simple matter to show that the same holds for all types of Chen simple
modules considered in Definition \ref{def:Chen-module}.
\end{proof}

\section{When every simple module is finitely presented}

Let $E$ be a finite graph. We first show that the simple module $V_{[p]}$ is
finitely presented if and only if $p$ is tail-equivalent to the rational
infinite path $ggg\cdot\cdot\cdot$ for some closed path $g$. We then show
that every (Chen) simple left $L:= L_{K}(E)$-module is finitely presented
if and only if every vertex in the graph $E$ is the base of at most one cycle
in $E$ (Theorem \ref{Simple Fin Presented}). Finally, we obtain a proof of our main result, 
Theorem \ref{theor:all-characterizations}.

The next proposition gives necessary and sufficient conditions under which the
simple left module $V_{[p]}$ (where $p$ is an infinite path) is finitely
presented. In its proof, we use the notation $P[\bar{E}]$ to denote the path
algebra over the field $K$ of the reverse graph $\bar{E}$ of $E$, with $\bar{E}^0= E^0$ and
$\bar{E}^1=\{ e^* : e\in E^1 \}$. 

A closed path $c$ is said to be {\it primitive} in case $c\ne d^r$ for any closed path $d$ and any $r\ge 2$.
We remark that these paths are called simple closed paths in \cite{C}.  
Observe that, if $c=e_1e_2\cdots e_n$ is primitive, then all the rotates 
$c_i = e_ie_{i+1} \cdots e_ne_1\cdots e_{i-1}$, $i=1,\dots , n$,  are different.

\begin{proposition}
\label{[p]Simply Presented} Let $E$ be a finite graph and $K$ be any field. Let
$p$ be an infinite path in $E$. Then the simple left $L_{K}(E)$-module
$V_{[p]}$ is finitely presented if and only if $p$ is tail-equivalent to the
(rational) infinite path $ccc\cdot\cdot\cdot$ where $c$ is some closed
path in $E$.
\end{proposition}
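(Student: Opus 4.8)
The plan is to reduce the statement to a single finite-generation property and then treat the two implications separately. Since $E$ is finite, $L$ is unital and $V_{[p]}$ is cyclic: writing $v_0=s(p)$ and $p=e_1e_2\cdots$, the basis vector $p$ satisfies $v_0\cdot p=p$, so there is a surjection $\pi\colon Lv_0\to V_{[p]}$, $x\mapsto x\cdot p$. A direct computation from Chen's action gives $\pi(\gamma\delta^{\ast})=\gamma\,\tau_{>|\delta|}(p)$ when $\delta$ is an initial segment of $p$, and $\pi(\gamma\delta^{\ast})=0$ otherwise. As $Lv_0$ is a finitely generated projective module and $V_{[p]}$ is cyclic, Schanuel's lemma shows that $V_{[p]}$ is finitely presented if and only if $\ker\pi$ is a finitely generated left ideal. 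Thus the proposition becomes the assertion that $\ker\pi$ is finitely generated precisely when $p$ is tail-equivalent to a rational path.

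For the implication ($\Leftarrow$), since $V_{[p]}$ depends only on $[p]$, I may assume $p=c^{\infty}$ with $c=e_1\cdots e_m$ a primitive closed path based at $v_0$. The element $c-v_0\in v_0Lv_0$ satisfies $(c-v_0)\cdot p=c\cdot p-v_0\cdot p=p-p=0$, so $L(c-v_0)\subseteq\ker\pi$. I would then prove the reverse inclusion $\ker\pi\subseteq L(c-v_0)$ by a normal-form argument: modulo $L(c-v_0)$ one has $c\equiv v_0$, and hence also $c^{\ast}\equiv v_0$ (left-multiply $c-v_0$ by $c^{\ast}$ and use $c^{\ast}c=r(c)=v_0$ together with $c^{\ast}v_0=c^{\ast}$). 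Using these two congruences every $\gamma\delta^{\ast}\in Lv_0$ reduces to a canonical representative, and the representatives are in bijection with $[p]$ under $\pi$. This yields $V_{[p]}\cong Lv_0/L(c-v_0)$, a presentation with one generator and one relation, so $V_{[p]}$ is finitely presented.

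For the implication ($\Rightarrow$) I argue the contrapositive: if $p$ is not eventually periodic then $\ker\pi$ is not finitely generated. A pigeonhole argument in the finite graph shows there are infinitely many indices $k$ for which $v_k:=r(e_k)$ emits some edge $e'_k\neq e_{k+1}$; indeed, if beyond some index every $v_k$ emitted only $e_{k+1}$, the tail of $p$ would be forced, and a finite graph forces eventual periodicity. For each such $k$ the word $e_1\cdots e_k e'_k$ is not an initial segment of $p$, so $\eta_k:=(e_1\cdots e_k e'_k)^{\ast}\in\ker\pi$, an element of ghost length $k+1$. The decisive point is then that no finite subset of $\ker\pi$ can generate all of the $\eta_k$: if $x_1,\dots,x_t$ were generators, involving only paths of length at most some $M$, then an $\eta_k$ with $k+1>M$ should not lie in $\sum_i Lx_i$, because a mismatch occurring at depth $k+1$ in $p$ cannot be produced from relations that only "see" $p$ up to length $M$.

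The hard part is precisely this last non-finite-generation step: the bookkeeping showing that a deep mismatch is not a consequence of shallow relations must be carried out carefully, for instance by a leading-term analysis of normal forms in $\sum_i Lx_i$, or by appealing to the classification in \cite{AB} of the finitely presented simple $L_K(E)$-modules in terms of the finite-dimensional irreducible representations of $P[\overline{E}]$, under which $V_{[p]}$ is finitely presented only when $p$ is rational. By contrast, the ($\Leftarrow$) direction is an explicit and essentially routine presentation computation.
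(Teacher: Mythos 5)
Your reduction via Schanuel's lemma to the finite generation of $\ker\pi$, where $\pi\colon Lv_0\to V_{[p]}$ is $x\mapsto x\cdot p$, is sound, and your ($\Leftarrow$) direction is correct and genuinely different from the paper's. The paper obtains that implication by quoting \cite{AB}: the span $N$ of the rotated tails $c_i^{\infty}$ is a finite-dimensional $P[\overline{E}]$-module, finitely presented by \cite[Proposition 2.2]{AB}, and \cite[Proposition 7.2]{AB} then transfers finite presentation to $V_{[p]}=L_K(E)N$. Your route instead exhibits the explicit one-relator presentation $V_{[c^{\infty}]}\cong Lv_0/L(c-v_0)$ for $c$ primitive, and the rewriting you sketch does close up: from $c^{*}-v_0=-c^{*}(c-v_0)$ one gets $xc\equiv x$ and $xc^{*}\equiv x$ in the quotient, every $\gamma\delta^{*}$ with $\delta$ not an initial segment of $c^{\infty}$ is killed by right-multiplying by a high power of $c$, and the surviving representatives biject with the basis of $V_{[c^{\infty}]}$. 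This is more elementary and self-contained than the paper's citation of \cite{AB}; note that it does require $c$ primitive, which you correctly arrange.

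The ($\Rightarrow$) direction, however, has a genuine gap, and you have located it yourself: everything rests on the claim that no finite subset of $\ker\pi$ generates all of the elements $\eta_k=(e_1\cdots e_ke_k')^{*}$, and the justification offered --- that a mismatch at depth $k+1$ cannot be a consequence of relations that only see $p$ up to depth $M$ --- is not a valid principle. Since $\ker\pi$ is a left ideal, a single shallow generator $x$ yields $yx$ for arbitrary $y\in L$, and such products involve ghost paths of arbitrary length; for instance $L\eta_j$ already contains the elements $(e_j'\mu)^{*}$ for every path $\mu$ with $r(\mu)=r(e_j')$, whose ghost length is unbounded. Excluding the $\eta_k$ from $\sum_i Lx_i$ therefore requires an actual leading-term or filtration analysis, which you have not supplied; your fallback of ``appealing to the classification in \cite{AB}'' is essentially the paper's proof rather than a completion of yours. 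For comparison, the paper's forward argument avoids $\ker\pi$ altogether: from \cite[Proposition 7.2]{AB} it gets $V_{[p]}=P[E]N$ with $\dim_K N<\infty$, hence finitely many tails $\tau_{>n_i}(p)$ generate $V_{[p]}$ over the path algebra $P[E]$ (real edges only); writing $\tau_{>m}(p)=\sum_i a_i\tau_{>n_i}(p)$ with $m>\max n_i$ and repeatedly left-multiplying by $e_{m+1}^{*},e_{m+2}^{*},\dots$ strips the coefficients $a_i$ down to scalars, forcing an equality of two distinct tails of $p$ and hence eventual periodicity. You would need either to import that argument or to carry out the leading-term analysis in full to close the gap.
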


\begin{proof}
Suppose $V_{[p]}$ is a finitely presented left $L_{K}(E)$-module. First
observe that the simple $L_{K}(E)$- module $V_{[p]}$ can not be projective. This is
because, since the graph $E$ is finite, the infinite path $p$ can not be tail
equivalent to a path containing a line point. Since $V_{[p]}$ is simple, it is
of finite length and contains no projective submodule and so by Proposition
7.2 (1) of \cite{AB}, $V_{[p]}$ is a finitely generated Blanchfield module
over the path algebra $P[E]$. Also, by Proposition 7.2 (2) of \cite{AB},
$V_{[p]}=L_{K}(E)\otimes_{P[\bar{E}]}N$ for some $P[\bar{E}]$-module $N$
having finite $K$-dimension. Indeed we can assume $V_{[p]}=P[E]N$ for some
$P[\bar{E}]$-module $N$ which is finite dimensional over $K$. In particular, there
are infinite paths $q_{1},\cdot\cdot\cdot,q_{r}\in\lbrack p]$ such that every
element $a$ in $V_{[p]}$ can be written as $a=%
{\textstyle\sum\limits_{i=1}^{n}}
a_{i}q_{i}$ for some $a_{i}\in P[E]$. Since each $q_{i}\sim p$, we can assume
that $q_{i}=\tau_{>n_{i}}(p)$ for some positive integer $n_{i}$. Choose an
integer $m$ larger than $\max\{n_{1},\cdot\cdot\cdot,n_{r}\}$. By
hypothesis,
\[
\tau_{>m}(p)=%
{\textstyle\sum\limits_{i=1}^{r}}
a_{i}\tau_{>n_{i}}(p)
\]
where $a_{i}\in P[E]$. Write
\[
p=e_{1}e_{2}e_{3}\cdot\cdot\cdot\text{ ,}%
\]
where $e_{i}$ are the edges of the infinite path $p$. Observe that
\[
\tau_{>m+1}(p)=e_{m+1}^{\ast}\tau_{>m}(p)=%
{\textstyle\sum\limits_{i=1}^{r}}
e_{m+1}^{\ast}a_{i}\tau_{>n_{i}}(p)\text{.}%
\]
In this way, we can reduce the length of the paths in each of the terms
$a_{i}$, and also possibly obtain some terms of the form $\tau_{>n_{i}+1}(p)$
for some $i$. Repeating this process a finite number of times we arrive at the
equation
\[
\tau_{>k}(p)=%
{\textstyle\sum\limits_{i=1}^{r}}
\lambda_{i}\tau_{>k_{i}}(p)
\]
where $\lambda_{i}\in K$ and $k>\max\{k_{1},\cdot\cdot\cdot k_{r}\}$. This
implies that $\tau_{>k}(p)=\tau_{>l}(p)$ for some integer $l<k$. Thus we get
the equality of the infinite paths%
\[
e_{k+1}e_{k+2}\cdot\cdot\cdot=e_{l+1}e_{l+2}\cdots
\]
and so $e_{k+1}=e_{l+1},e_{k+2}=e_{l+2}\cdots,e_{2k-l}=e_{k}
,\, e_{2k-l+1}=e_{k+1}=e_{l+1},
\, e_{2k-l+2}=e_{k+2}=e_{l+2}\cdots$. We conclude that
\[
p=(e_{1}e_{2}\cdots e_{l})ccc\cdots
\]
where $c=e_{l+1}e_{l+2}\cdots e_{k}$ so that $p$ is tail-equivalent
to $c^{\infty}=$ $cccc\cdots$ , as desired.

Conversely, suppose $p$ is tail-equivalent to the infinite path $c^{\infty
}=ccc\cdots$ for some primitive path $c$ of length $n$, say
$c=e_{1}\cdots e_{n}$. Let $c_{1}=c$ and for each $i=2,...,n$, let
$c_{i}=e_{i}e_{i+1}\cdots e_{i-1}$ be the $i$-th rotate of $c$. For each $i$, let $p_{i}=c_{i}^{\infty}=c_{i}c_{i}c_{i}\cdot\cdot
\cdot$ be an infinite path. Then the finite dimensional $K$-vector space
$N=Kp_{1}\oplus\cdot\cdot\cdot\oplus Kp_{n}$ is actually a $P[\bar{E}]$-module
and, by Proposition 2.2 of \cite{AB}, $N$ is a finitely presented $P[\bar{E}]$-module. 
Now $V_{[p]}=L_{K}(E)N$ and for any sink $u\in E^{0}$, $u\cdot V_{[p]}=0$. Then by
Proposition 7.2 of \cite{AB}, $V_{[p]}$ is finitely presented as a left
$L_{K}(E)$-module.
\end{proof}

\begin{notation}
If $E$ is a graph and $v\in E^{0}$ is a source then $E\backslash v$ denotes
the "source elimination graph" where $(E\backslash v)^{0}=E^{0}\backslash
\{v\},(E\backslash v)^{1}=E^{1}\backslash s^{-1}(v)$, $s_{E\backslash
v}=s|(E\backslash v)^{1}$ and $r_{E\backslash v}=r|(E\backslash v)^{1}$
\end{notation}

The following lemma was proved in  \cite[Lemma 1.4]{ALPS} under the
assumption that $L_{K}(E)$ is simple and it can also be derived from \cite[Lemma 6.1]{ABC}. We
give a direct proof for completeness.

\begin{lemma}
\label{No sources}Let $E$ be a finite graph. If $v$ is a source and not a
sink, then $L_{K}(E)$ is Morita equivalent to $L_{K}(E\backslash v)$.
\end{lemma}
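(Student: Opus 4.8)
The plan is to realize $L_K(E\setminus v)$ as a corner $\epsilon L_K(E)\epsilon$ for a full idempotent $\epsilon$, and then invoke the standard fact that a unital ring $R$ is Morita equivalent to $\epsilon R\epsilon$ whenever $\epsilon$ is a full idempotent, that is, $R\epsilon R=R$ (the bimodules $R\epsilon$ and $\epsilon R$ implement the equivalence). Since $E$ is finite, $L_K(E)$ is unital with $1=\sum_{w\in E^0}w$, and the natural candidate is $\epsilon=\sum_{w\in H}w=1-v$, where $H=E^0\setminus\{v\}$.

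First I would check that $H$ is hereditary. Because $v$ is a source there is no edge with range $v$, so no vertex can reach $v$ by a positive-length path; hence if $u\in H$ and $u\ge w$, then $w\neq v$ (a path from $u\neq v$ to $v$ would have to end with an edge in $r^{-1}(v)=\emptyset$), so $w\in H$. With $H$ hereditary, the restricted graph $E_H$ carries exactly the data of the source elimination graph: $(E_H)^0=E^0\setminus\{v\}=(E\setminus v)^0$ and $(E_H)^1=\{e\in E^1:s(e)\in H\}=E^1\setminus s^{-1}(v)=(E\setminus v)^1$, so $E_H=E\setminus v$. By Lemma \ref{lem:Morita-inv-Chenmods} this gives the graded isomorphism $L_K(E\setminus v)=L_K(E_H)\cong\epsilon L_K(E)\epsilon$.

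The main point, and the step where the hypotheses enter most directly, is to prove that $\epsilon$ is full, i.e. that $v$ lies in the two-sided ideal $L_K(E)\epsilon L_K(E)$. Both hypotheses are needed here: since $E$ is finite and $v$ is not a sink, $v$ is a regular vertex, so CK-2 yields $v=\sum_{e\in s^{-1}(v)}ee^*$; and since $v$ is a source, $r(e)\neq v$ for every such $e$, hence $r(e)\in H$ and $r(e)=\epsilon\, r(e)\,\epsilon$. Writing $ee^*=e\,r(e)\,e^*$ then shows each summand lies in $L_K(E)\epsilon L_K(E)$, whence so does $v$. Consequently $1=v+\epsilon\in L_K(E)\epsilon L_K(E)$, so $\epsilon$ is full.

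Finally, combining the fullness of $\epsilon$ with the isomorphism $L_K(E\setminus v)\cong\epsilon L_K(E)\epsilon$ and the standard Morita theory of full idempotents yields that $L_K(E)$ and $L_K(E\setminus v)$ are Morita equivalent. The only delicate point is the fullness verification, which is precisely where the source condition (forcing $r(e)\neq v$) and the non-sink condition (so that CK-2 applies to $v$) are used; everything else is a routine identification of the corner $\epsilon L_K(E)\epsilon$ with $L_K(E\setminus v)$.
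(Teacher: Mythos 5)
Your proof is correct and follows essentially the same route as the paper: realize $L_K(E\backslash v)$ as the corner $\epsilon L_K(E)\epsilon$ with $\epsilon = 1-v$, and show $\epsilon$ is full using CK-2 at $v$ (valid since $v$ is regular) together with $r(e)\neq v$ for $e\in s^{-1}(v)$. The only cosmetic difference is that you obtain the corner identification by noting $E^0\setminus\{v\}$ is hereditary and citing Lemma \ref{lem:Morita-inv-Chenmods}, whereas the paper re-proves that identification directly via the graded monomorphism $\theta$ and a spanning argument.
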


\begin{proof}
First, observe that the hypothesis that $v$ is a source but not a sink implies
that $|E^0|\geq2$.

Note that $E\backslash v$ is a complete subgraph of $E$. Hence, the
$K$-algebra map $\theta:L_{K}(E\backslash v)\longrightarrow L_{K}(E)$ given,
for all $w\in(E\backslash v)^{0}$, $e\in(E\backslash v)^{1}$, by
$\theta(w)=w,\theta(e)=e$ and $\theta(e^{\ast})=e^{\ast}$ is a non-zero graded
homomorphism. Since $\theta$ is non-zero at all the vertices of $E\backslash
v$, it is then a monomorphism.

Let $\epsilon=\theta(1_{L_{K}(E\backslash v)})=%
{\textstyle\sum\limits_{w\in E^{0},w\neq v}}
w$. We claim that $\theta(L_{K}(E\backslash v))=\epsilon L_{K}(E)\epsilon$.
Clearly $\theta(L_{K}(E\backslash v))\subseteq\epsilon L_{K}(E)\epsilon$. To
prove the other inclusion, note that $\epsilon L_{K}(E)\epsilon$ is linearly
spanned by elements $pq^{\ast}\in\epsilon L_{K}(E)\epsilon$ such that $s(p)\ne
v$ and $s(q)\ne v$. Moreover, since $v$ is a source $p$ as well as $q$ cannot
pass through $v$. Hence both $p$ and $q$ are paths in $E\backslash v$,
consequently $pq^{\ast}=\theta(pq^{\ast})\in\theta(L_{K}(E\backslash v))$,
thus proving our claim.

To show the Morita equivalence, we need also to show that $L_{K}(E)\epsilon
L_{K}(E)=L_{K}(E)$. It is enough to show that $v$ is in $L_{K}(E)\epsilon
L_{K}(E)$. Let $\{e_{1},\cdots ,e_{n}\}=s^{-1}(v)\neq\emptyset$. Since
$r(e_{i})$ belongs to the ideal $L_{K}(E)\epsilon L_{K}(E)$, the edge $e_{i}$
belongs to $L_{K}(E)\epsilon L_{K}(E)$, for all $i=1,\cdots ,n$. Then
$v=%
{\textstyle\sum\limits_{i=1}^{n}}
e_{i}e_{i}^{\ast}\in L_{K}(E)\epsilon L_{K}(E)$. This proves that
$L_{K}(E)\epsilon L_{K}(E)=L(E)$. Hence $L_{K}(E)$ is Morita equivalent to
$L_{K}(E\backslash v)$.
\end{proof}

\begin{lemma}
\label{cycle to loop}Let $E$ be a finite graph. Let $c$ be a cycle in $E$
without entries, that is, such that $|r^{-1}(v)|=1$ for all $v\in c^{0}$. Then
a finite graph $F$ can be constructed from $E$ in which the cycle $c$ is
replaced by a loop such that $L_{K}(F)$ is Morita equivalent to $L_{K}(E)$.
\end{lemma}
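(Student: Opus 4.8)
The plan is to collapse the cycle to a single loop and to realize the result as a full corner of $L_K(E)$. Write $c=e_1\cdots e_n$ with distinct cycle vertices $v_1,\dots,v_n$, $s(e_i)=v_i$, $r(e_i)=v_{i+1}$ and $v_{n+1}=v_1$; if $n=1$ there is nothing to do, so assume $n\ge 2$. The hypothesis $|r^{-1}(v)|=1$ for $v\in c^0$ says precisely that the only edges ending on $c^0$ are the cycle edges: no edge enters the cycle from outside, and every non-cycle edge $g$ with $s(g)\in c^0$ satisfies $r(g)\notin c^0$. Writing $\mu_i=e_1\cdots e_{i-1}$ for the path in $c$ from $v_1$ to $v_i$ (so $\mu_1=v_1$), I define $F$ by deleting $v_2,\dots,v_n$: set $F^0=E^0\setminus\{v_2,\dots,v_n\}$ and let $F^1$ consist of (i) all edges of $E$ with both endpoints outside $c^0$, (ii) a loop $\ell$ at $v_1$, and (iii) for each edge $g$ with $s(g)=v_i$ and $g\ne e_i$ a new edge $\tilde g$ from $v_1$ to $r(g)$. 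Here $\ell$ will play the role of the full cycle $c$ and $\tilde g$ that of the path $\mu_i g$.

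Put $e=\sum_{w\in F^0}w=1-\sum_{i=2}^n v_i$. Since $v_i=\mu_i^*v_1\mu_i$, each deleted vertex lies in the ideal generated by $v_1\le e$, so $e$ is a full idempotent and $L_K(E)$ is Morita equivalent to $eL_K(E)e$. It thus suffices to produce an isomorphism $\phi\colon L_K(F)\to eL_K(E)e$. I define $\phi$ on generators by $\phi(w)=w$, $\phi(\ell)=c$, $\phi(\tilde g)=\mu_i g$, $\phi(h)=h$ for outside edges $h$, together with the obvious values on the starred generators; every image lies in $eL_K(E)e$ because its source and range are vertices of $F^0$.

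Checking that $\phi$ respects the relations is routine except for one point. The (CK1) relations and the source/range relations reduce, after the cancellations $e_j^*e_j=v_{j+1}$ and $g^*e_i=0$ for $g\ne e_i$, to the corresponding relations in $E$. The substantial identity is the (CK2) relation at $v_1$, namely $v_1=\ell\ell^*+\sum_{g}\tilde g\tilde g^*$: its image $cc^*+\sum_i\sum_{g}(\mu_i g)(\mu_i g)^*$ is shown to equal $v_1$ by a telescoping computation that applies (CK2) in $E$ successively at $v_n,v_{n-1},\dots,v_1$ inside $cc^*=e_1\cdots e_n e_n^*\cdots e_1^*$. The (CK2) relations at outside vertices are unaffected, since no edge from such a vertex meets $c^0$. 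For surjectivity one notes that a monomial $pq^*\in eL_K(E)e$ has $s(p),s(q)\in F^0$; since no path can enter $c^0$ from outside, each of $p,q$ is a concatenation of full cycles, blocks $\mu_i g$, and outside edges, and hence lies in $\operatorname{im}\phi$ (when $r(p)=r(q)$ is an internal vertex $v_i$ one first expands $\mu_i\mu_i^*$ using (CK2) at $v_i,\dots,v_n$).

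The main obstacle is injectivity, and the difficulty is that $\phi$ is \emph{not} graded for the canonical grading (it sends the degree-one generator $\ell$ to the degree-$n$ element $c$), so the graded uniqueness theorem does not apply directly. I would instead argue on bases. Both $L_K(E)$ and $L_K(F)$ have the standard $K$-basis of reduced monomials $pq^*$ with $r(p)=r(q)$ and $p,q$ not ending in a common edge. The block description of $\phi$ shows that an image $E$-path can be uniquely parsed back into its $F$-blocks (read off maximal cycle traversals), so $\phi$ is injective on paths; moreover no $F$-edge maps to an $E$-path ending in an internal edge $e_j$ with $j<n$, whence the last edge of $\phi(\hat p)$ records the last $F$-edge of $\hat p$. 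Consequently $\phi$ sends reduced $F$-monomials to distinct reduced $E$-monomials, i.e. it carries the basis of $L_K(F)$ injectively into that of $L_K(E)$. Thus $\phi$ is an isomorphism onto $eL_K(E)e$, and $L_K(F)$, in which the cycle $c$ has been replaced by the loop $\ell$, is Morita equivalent to $L_K(E)$, as required.
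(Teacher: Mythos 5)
Your construction and overall strategy coincide with the paper's: the same collapsed graph $F$ (a loop at $v_1$ plus edges $\tilde g$ standing for the blocks $\mu_i g$), the same homomorphism $\phi$, the same telescoping verification of (CK2) at the new vertex, and the same full-corner argument using $v_i=\mu_i^* v_1\mu_i$. The surjectivity onto $eL_K(E)e$ and the fullness of $e$ are fine. You are also right to flag that $\phi$ is not graded, so injectivity cannot be outsourced to the graded uniqueness theorem and must be done by hand; the paper does this exactly as you propose in spirit, by carrying a linear basis of $L_K(F)$ injectively into a linear basis of $L_K(E)$.

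There is, however, a genuine gap in the injectivity step: the set you call ``the standard $K$-basis of reduced monomials $pq^*$ with $p,q$ not ending in a common edge'' is \emph{not} a basis --- it is linearly independent but does not span. Already for one vertex with two loops $e,f$, neither $ee^*$ nor $ff^*$ lies in the span of $v$, the paths, the ghost paths, and the monomials ending in distinct edges; the relation $v=ee^*+ff^*$ lets you eliminate only one of the two. The correct normal form (the one the paper uses, from \cite{AAJZ}) fixes one \emph{special} edge $\gamma(w)\in s^{-1}(w)$ at each non-sink vertex $w$ and discards only those $pq^*$ whose common last edge is special, keeping all the others. Since your argument proves injectivity only on the span of your set, it does not yet prove injectivity of $\phi$. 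The repair is exactly the paper's bookkeeping: choose $\gamma(v_i)=e_i$ on the cycle and $\gamma(v_1)=\ell$ in $F$ (and the same special edges at outside vertices in both graphs), and then check --- using your correct observations that an image path parses uniquely into blocks and that the last $E$-edge of $\phi(\hat p)$ determines the last $F$-edge of $\hat p$ --- that $\phi$ maps basis monomials (common last edge non-special) to basis monomials, injectively. Without that compatible choice of special edges, the step ``reduced $F$-monomials go to distinct reduced $E$-monomials'' does not yield injectivity of $\phi$ on all of $L_K(F)$.
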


\begin{proof}
Write $c=e_{1}\cdots e_{r}$, with $s(e_{i})=v_{i}$ for all $i$. We
define a graph $F$ as follows:

Let $F^{0}=( E^{0}\backslash c^{0}) \cup\{v\}$ where $v$ is a new vertex.

To define $F^{1}$, note that by our hypothesis, $r(e)\notin c^{0}$ for all
$e\in E^{1}$ such that $s(e)\notin c^{0}$, that is, $E^{0}\setminus c^0 $ is a
hereditary set. So, we define $s_{F}^{-1}(w)=s_{E}^{-1}(w)$ if $w\in
E^{0}\setminus c^{0}$.

Corresponding to an edge $f$ with $s(f)\in c^{0}$ and $r(f)\in$ $E^{0}%
\backslash c^{0}$, define an edge $f^{\prime}$ in $F^{1}$ with $s_{F}%
(f^{\prime})=v$ and $r_{F}(f^{\prime})=r(f)$. Finally, we define a loop
$e^{\prime}$ at $v$ so that $s_{F}(e^{\prime})=v=r_{F}(e^{\prime})$.

We now define a map $\theta:L_{K}(F)\longrightarrow L_{K}(E)$ as follows:
$\theta(w)=w$ for all $w\in E^{0}\backslash c^{0}$ and $\theta(v)=v_{1}$
(where $v_{1}=s(e_{1})$). As for edges, $\theta(e)=e$ for all $e$ with
$s(e)\in$ $E^{0}\backslash c^{0}$.

We set $\theta(f^{\prime})=e_{1}\cdots e_{i-1}f$ if the edge $f$
corresponding to $f^{\prime}$ satisfies $s(f)=v_{i}$ and $r(f)\in
E^{0}\setminus c^0$.

Finally, set $\theta(e^{\prime})=$ $e_{1}\cdots e_{r}$.

It can be verified that the CK-relations are preserved under this map and so
$\theta$ extends to a well-defined algebra homomorphism from $L_{K}(F)$ onto
the corner $\epsilon L_{K}(E)\epsilon$ where $\epsilon=\theta(1_{L_{K}(F)})$.
(Note that $1_{L_{K}(F)}=v+
{\textstyle\sum\limits_{u\in E^{0}\backslash c^{0}}}
u$.) The most tricky part of this verification is to show the preservation of
(CK2) at vertex $v$. To show this, observe that
\begin{align*}
&  \sum_{\alpha\in s_{F}^{-1}(v)}\theta(\alpha\alpha^{\ast})=e_{1}\cdots
e_{r}e_{r}^{\ast}\cdots e_{1}^{\ast}+\sum_{i=1}^{r}\sum_{g\in s_{E}^{-1}%
(v_{i})\setminus\{e_{i}\}}e_{1}\cdots e_{i-1}gg^{\ast}e_{i-1}^{\ast}\cdots
e_{1}^{\ast}\\
&  =e_{1}\cdots e_{r-1}\Big(e_{r}e_{r}^{\ast}+\sum_{g\in s_{E}^{-1}%
(v_{r})\setminus\{e_{r}\}}gg^{\ast}\Big)e_{r-1}^{\ast}\cdots e_{1}^{\ast}\\
&  +\sum_{i=1}^{r-1}\sum_{g\in s_{E}^{-1}(v_{i})\setminus\{e_{i}\}}e_{1}\cdots
e_{i-1}gg^{\ast}e_{i-1}^{\ast}\cdots e_{1}^{\ast}\\
&  =e_{1}\cdots e_{r-1}e_{r-1}^{\ast}\cdots e_{1}^{\ast}+\sum_{i=1}^{r-1}%
\sum_{g\in s_{E}^{-1}(v_{i})\setminus\{e_{i}\}}e_{1}\cdots e_{i-1}gg^{\ast
}e_{i-1}^{\ast}\cdots e_{1}^{\ast}\\
&  =\cdots\,\,\cdots\,\,\cdots\\
&  =e_{1}e_{1}^{\ast}+\sum_{g\in s_{E}^{-1}(v_{1})\setminus\{e_{1}\}}gg^{\ast
}=v_{1}.
\end{align*}
Since $\theta(v)=v_{1}$, this shows that relation (CK2) at $v$ is preserved.

To show that the map $\theta$ is injective, we show that $\theta$ sends, in a
one-to-one way, a basis of $L_{K}(F)$ to a subset of a basis of $L_{K}(E)$. We
make use of the basis defined in \cite[Section 3]{AAJZ} (see also \cite[Chapter 1]{AAS}). 
For each $w\in E^{0}\setminus c^{0}$ which is not a sink, choose an edge $\gamma(w)$ in
$s_{E}^{-1}(w)$. For each $i=1,\dots,r$, set $\gamma(v_{i}) = e_{i}\in
s_{E}^{-1}(v_{i})$. Refer to these edges as special. By \cite[Theorem 1]%
{AAJZ}, a basis $\mathcal{B }_{E}$ of $L_{K}(E)$ is given by the following
elements (i) $w$, where $w\in E^{0}$, (ii) $p,p^{*}$, where $p$ is a path in
$E$, (iii) $pq^{*}$, where $p=e_{1}\cdots e_{n}$, $q=f_{1}\cdots f_{m}$ are
paths that end at the same vertex $r(e_{n})=r(f_{m})$, with $n,m\ge1$, with no
restriction when $e_{n}\ne f_{m}$, but with the restriction that $e_{n}$ must
not be special when $e_{n}=f_{m}$. In other words we avoid terms of the form
$e_{1}e_{2}\cdots e_{n}e_{n}^{*}f_{m-1}^{*}\cdots f_{1}^{*}$, with $e_{n}$ special.

Consider a corresponding basis $\mathcal{B }_{F}$ for $L_{K}(F)$ by declaring
as special the same edges $\gamma(w)$ as before for $w\in E^{0}\setminus
c^{0}$, and declaring $\gamma(v) = e^{\prime}$. Then it is clear that $\theta$
restricts to an injective mapping from the basis of $L_{K}(F)$ into the basis
of $L_{K}(E)$.

We now check that the image of $\theta$ is exactly $\epsilon L_{K}(E)\epsilon
$. Indeed, it is easy to check, using the hypothesis that $c$ has no entries,
that the subset $\theta(\mathcal{B }_{F})$ of the basis $\mathcal{B}_{E}$ of
$L_{K}(E)$ is a linear basis for $\epsilon L_{K}(E)\epsilon$, and so
$\theta(L_{K}(F))= \epsilon L_{K}(E)\epsilon$.

To show that $L_{K}(E)\epsilon L_{K}(E)=L_{K}(E)$, it is enough to observe
that $c^{0}\subseteq L_{K}(E)\epsilon L_{K}(E)$. This follows from the fact
that $v_{1}\in L_{K}(E)\epsilon L_{K}(E)$ and the equality $v_{i}=e_{i-1}%
^{*}\cdots e_{1}^{*}v_{1}e_{1}\cdots e_{i-1}$ for $i=2,\dots,r$.

We have shown that $L_{K}(F)$ is isomorphic to a full corner $\epsilon
L_{K}(E)\epsilon$ of $L_{K}(E)$, and so $L_{K}(E)$ is Morita equivalent to
$L_{K}(F)$.
\end{proof}

We introduce a pre-order $\leq$ on the set of cycles of a directed graph $E$,
as follows. If $c_{1}$ and $c_{2}$ are two cycles in $E$, set $c_{1}\leq
c_{2}$ in case there is a path from a vertex of $c_{2}$ to a vertex of $c_{1}%
$. Note that this is indeed a partial order in case $E$ satisfies the
graph-theoretic condition (ii) in Theorem \ref{Simple Fin Presented}. We say
that a cycle $c$ of $E$ is a \textit{maximal cycle} in case, for any cycle
$c^{\prime}$ in $E$, $c\leq c^{\prime}$ implies $c^{\prime}\leq c$.

\begin{theorem}
\label{Simple Fin Presented}Let $E$ be a finite graph and $K$ be any field.
Then the following conditions are equivalent for the Leavitt path algebra
$L=L_{K}(E)$:

(i) \ \ Every simple left $L$-module is finitely presented;

(ii) \ Every  Chen simple module is finitely presented.

(iii) \ Every vertex $v$ in $E$ is the base of at most one cycle.
\end{theorem}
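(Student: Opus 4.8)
The plan is to prove the cycle of implications (i) $\Rightarrow$ (ii) $\Rightarrow$ (iii) $\Rightarrow$ (i). The first is immediate, since every Chen module is in particular a simple module. For (ii) $\Rightarrow$ (iii) I argue by contraposition: if some vertex $v$ is the base of two \emph{distinct} cycles $c$ and $d$, then, concatenating them at $v$, the infinite path $p = c\,d\,c^2\,d\,c^3\,d\cdots$ (with $c$-runs of strictly increasing length separated by single copies of $d$) is a well-defined infinite path based at $v$ which is not eventually periodic, hence not tail-equivalent to any rational path $g^{\infty}$. By Proposition \ref{[p]Simply Presented} the type-(4) Chen module $V_{[p]}$ is then not finitely presented, contradicting (ii).

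The substance lies in (iii) $\Rightarrow$ (i), and I first record the combinatorial consequences of (iii). Since $E$ is finite it has no infinite emitters, so the Chen modules are only of types (1), (4), (5) of Definition \ref{def:Chen-module} (and every $B_H$ is empty). Under (iii) the pre-order $\leq$ on cycles is a genuine partial order, and a short argument shows that the only closed paths based at a vertex $u$ lying on its (necessarily unique) cycle $c_u$ are the powers $c_u^{k}$; consequently every infinite path in the finite graph $E$ is eventually periodic, i.e.\ tail-equivalent to some $c^{\infty}$. Combined with Proposition \ref{[p]Simply Presented}, this shows every type-(4) module $V_{[p]}$ is finitely presented. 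A type-(1) module $\mathbf{N}_{w}$ is isomorphic to the finitely generated projective $L_K(E)w$, hence finitely presented; and a type-(5) module $V_{[q]}^{f}$ is finitely presented by exhibiting, exactly as in the converse part of Proposition \ref{[p]Simply Presented}, a finite-dimensional $P[\bar E]$-module $N$ (built from the rotates of $c$ and the finite extension $K'=K[x,x^{-1}]/(f)$) with $V_{[q]}^{f}=L_K(E)N$ and invoking \cite[Prop.~7.2]{AB}. Thus, under (iii), \emph{every} Chen module is finitely presented; this already yields (ii).

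To upgrade this to (i) I will induct on $|E^{0}|$ using the Morita reductions at our disposal, all of which preserve condition (iii) (they create no new cycle and no second cycle at any vertex) and preserve property (i) (finite presentation is a Morita-invariant property of the module category). If $E$ is acyclic then $L_K(E)$ is finite-dimensional, so every simple module is finite-dimensional and a fortiori finitely presented; this is the base case. If $E$ has a cycle, I first delete every source that is not a sink via Lemma \ref{No sources}. In the resulting graph I choose a \emph{maximal} cycle $c$; the partial-order argument shows that once the sources are removed a maximal cycle has no entries, so if $c$ has length $\geq 2$ it can be collapsed to a loop by Lemma \ref{cycle to loop}, strictly decreasing $|E^{0}|$ and allowing the induction to proceed.

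The remaining, and main, obstacle is the case in which the maximal cycle is already an entry-free loop $\ell$ at a vertex $v$, where no vertex-reducing Morita move applies. Here I would pass to the hereditary saturated set $H$ generated by the targets of the exits of $v$, giving an algebra extension $0 \to I(H) \to L_K(E) \to L_K(E\setminus(H,B_H)) \to 0$ in which $E\setminus(H,B_H)$ is the single-loop graph, so that the quotient is $K[x,x^{-1}]$ and has all its simple modules finitely presented, while $I(H)$ is Morita equivalent to $L_K(E_H)$ on strictly fewer vertices (via the corner construction of Lemma \ref{lem:Morita-inv-Chenmods}), to which the inductive hypothesis applies. The delicate point---the crux of the whole theorem---is to combine these two halves into finite presentation of \emph{every} simple $L_K(E)$-module: a simple module not killed by $I(H)$ restricts to a simple $I(H)$-module and is controlled by induction, whereas a simple module killed by $I(H)$ must be promoted from a finite presentation over $K[x,x^{-1}]$ to one over $L_K(E)$, which requires the finiteness of $E$ to guarantee that $I(H)$ is finitely generated as a one-sided ideal. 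Equivalently, this step amounts to showing that under (iii) every simple $L_K(E)$-module is one of the Chen modules already treated, and it is here that the structural input of Theorem \ref{thm:realizingprims}, together with the uniqueness of the faithful simple module over each primitive quotient, has to be brought to bear.
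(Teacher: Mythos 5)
Your treatment of (i) $\Rightarrow$ (ii) and (ii) $\Rightarrow$ (iii) coincides with the paper's (the paper uses the aperiodic path $gh^{2}gh^{3}\cdots$ where you use $c\,d\,c^{2}d\,c^{3}d\cdots$), and your reduction scheme for (iii) $\Rightarrow$ (i) --- induction, source elimination via Lemma \ref{No sources}, collapsing a maximal cycle to a loop via Lemma \ref{cycle to loop}, and then splitting a simple module $S$ according to whether $MS=S$ or $MS=0$ for $M=I(E^{0}\setminus\{v\})$ --- is also exactly the paper's. The difficulty is that you stop at precisely the point where the theorem is actually proved. For a simple $S$ with $MS=0$ one has $S\cong L/(Lf(c)+M)$ for an irreducible $f\in K[x,x^{-1}]$, and finite presentation of $S$ amounts to showing that the left ideal $Lf(c)+M$ is finitely generated. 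You assert that this follows because ``the finiteness of $E$ guarantees that $I(H)$ is finitely generated as a one-sided ideal,'' but that claim is false: already in Example \ref{ex:Jacobson-alg} (the Jacobson algebra) the ideal $M=I(\{w\})$ is the socle, an infinite direct sum of copies of the simple module $Lw$, hence not finitely generated as a left ideal. As a left ideal, $M$ requires the infinitely many generators $e_{i}^{\ast}(c^{\ast})^{j}$, $j\geq 0$ (together with $H$), and the entire point of the paper's argument is that \emph{modulo $Lf(c)$} these collapse to the finitely many with $0\leq j\leq n-1$, via the identity $e_{i}^{\ast}(c^{\ast})^{r+1}=e_{i}^{\ast}(c^{\ast})^{r+1}f(c)-a_{1}e_{i}^{\ast}(c^{\ast})^{r}-\cdots-a_{n}e_{i}^{\ast}(c^{\ast})^{r+1-n}$. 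That computation is the crux of (iii) $\Rightarrow$ (i) and is entirely absent from your proposal.

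Your fallback suggestion --- show that under (iii) every simple module is a Chen module, and combine this with the fact that every Chen module is finitely presented --- is a legitimate alternative route in principle (the paper's Corollary \ref{simple is Chen} establishes the needed uniqueness statement by a parallel induction that does not presuppose finite presentation), but you do not carry it out either: the uniqueness of the simple module with a given primitive annihilator is itself nontrivial and is not supplied by Theorem \ref{thm:realizingprims} alone, which only gives existence. This route would moreover require a genuine proof that the twisted modules $V_{[q]}^{f}$ are finitely presented; your sketch via a $K'$-linear variant of the converse of Proposition \ref{[p]Simply Presented} is plausible but remains a sketch. As written, the implication (iii) $\Rightarrow$ (i) is not established.
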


\begin{proof} (i) $\implies $ (ii) is immediate.

Assume (ii) so that every simple Chen $L$-module is finitely presented. 
Assume, by way of contradiction, that there is a $v\in
E^{0}$ which is the base of two different cycles $g,h$. Consider the infinite path
\[
p=gh^{2}gh^{3}\cdots gh^{n}gh^{n+1}\cdots \text{ .}%
\]
This path $p$ cannot be tail-equivalent to the rational path $c^{\infty
}=ccc\cdots $ for any closed path $c$ in $E$. Hence, by Proposition
\ref{[p]Simply Presented}, the Chen simple module $V_{[p]}$ is not finitely
presented, a contradiction. Thus every vertex in $E$ is the base of at most
one cycle.

Assume (iii) so that every vertex in $E$ is the base of at most one cycle. We
need to show that every simple left $L$-module is finitely presented.

Let $n$ be the number of distinct cycles in $E$. We apply induction on $n$ to
show that every simple left $L$-module is finitely presented.

The base case is the case $n=0$, that is, the case where $E$ contains no
cycles. In that case, $L$ is a semi-simple artinian ring and all its
left/right simple modules are projective and hence finitely presented.

Suppose $n\ge1$ and that the result is true for graphs containing less than
$n$ cycles.

Using Lemma \ref{No sources} a finite number of times, we get a finite graph
without sources $F$, containing the same closed paths as $E$, and such that
$L_{K}(E)$ is Morita equivalent to $L_{K}(F)\times K^{t}$ for some $t\ge0$.
Since Morita equivalence between module categories (over unital rings)
preserves simple modules and finite presentation (see \cite{AF}), we can
therefore assume that the graph $E$ has no sources. Then all paths in $E$ can
be seen as portions of paths coming from a cycle in $E$.

Let $c$ be a maximal cycle. Since we are assuming that $E$ does not have
sources, the cycle $c$ has no entries. So, using Lemma \ref{cycle to loop}, we
may assume that $c$ is a loop, with $s(c)=v=r(c)$. Since $c$ is a maximal
cycle and $E$ has no sources, $E^{0}\backslash\{v\}$ is a hereditary saturated
set. Let $M$ be the (graded) ideal of $L$ generated by $E^{0}\backslash\{v\}$.
Clearly $L/M\cong K[x,x^{-1}]$.

Consider an arbitrary simple left $L$-module $S$.

Suppose $MS=S$. Let $e=\sum_{w\in E^{0}\setminus\{v\}}w$. Then $e$ is a full
idempotent in $M$, that is $LeL=MeM=M$, and so $M$ is Morita equivalent to
$eLe=L(E_{H})$, where $H:=E^{0}\setminus\{v\}$, and $E_{H}$ denotes the
restriction of $E$ to $H$, that is, the graph with $(E_{H})^{0}=H$ and
$(E_{H})^{1}=\{e\in E^{1}\mid s(e)\in H\}$. Note that we have a surjective
Morita context given by $(Le,eL)$, that is, we have surjective bimodule
homomorphisms
\[
eL\otimes_{L}Le\rightarrow eLe,\qquad Le\otimes_{eLe}eL\rightarrow LeL=M.
\]
Now $E_{H}$ contains $n-1$ cycles and so, by induction hypothesis, all simple
$eLe$-modules are finitely presented. Since $M$ is also a Leavitt path algebra
(see \cite[Lemma 1.2]{AP}), it has local units, and so we can apply 
\cite[Theorem 2.2]{AM}
to deduce that there is an equivalence of categories between $M$-Mod and
$eLe$-Mod, induced by the functors $eL\otimes_{M}-$ and $Le\otimes_{eLe}-$.
Therefore there is a simple $eLe$-module $S^{\prime}$ such that $Le\otimes
_{eLe}S^{\prime}\cong S$. \ Since $S^{\prime}$ is finitely presented, so is
$S$. Indeed if
\[
(eLe)^{n}\longrightarrow(eLe)^{m}\longrightarrow S^{\prime}\longrightarrow0
\]
is an exact sequence of $eLe$-modules, then
\[
(Le)^{n}\longrightarrow(Le)^{m}\longrightarrow Le\otimes_{eLe}S^{\prime
}\longrightarrow0
\]
is an exact sequence of $L$-modules, which shows that $S$ is finitely presented.

Suppose now that $MS=0$. Then $S$ is a simple $K[x,x^{-1}]$-module and so
there is a polynomial $f(x)=1+a_{1}x+\cdots +a_{n}x^{n}$, with
$a_{n}\neq0$ such that $S\cong K[x,x^{-1}]/(K[x,x^{-1}]f(x))$. Consequently,
$S\cong L/(Lf(c)+M)$.

Set $s^{-1}(v)\setminus\{c\}=\{e_{1},\dots,e_{k}\}$.

Let $N$ be the (finitely generated) left ideal of $L$ generated by
$H=E^{0}\setminus\{v\}$ and by all the paths of the form $e_{i}^{\ast}%
(c^{\ast})^{j}$, with $1\leq i\leq k$, $0\leq j\leq n-1$. Observe that
$e_{i}^{\ast}\in M$, so that $N\subseteq M$, as $M$ is an ideal of $L$. We claim that $Lf(c)+M=Lf(c)+N$.
This will show that $S$ is finitely presented. Note that $M$ is linearly spanned
by the elements of the form $pq^{\ast}$, where $p,q$ are paths in $E$ such
that $r(p)=r(q)\in H$. If $r(pq^{\ast})=s(q)\in H$ then $pq^{\ast}\in
Ls(q)\subseteq N$. Therefore we can assume that $s(q)=v$. In this case observe
that $q=c^{j}e_{i}q_{1}$ for some $1\leq i\leq k$ and $j\geq0$, and some path
$q_{1}$. Therefore $pq^{\ast}\in Le_{i}^{\ast}(c^{\ast})^{j}$. It thus
suffices to show that $e_{i}^{\ast}(c^{\ast})^{j}$ belongs to $Lf(c)+N$ for
all $i,j$. If $0\leq j\leq n-1$, this follows from the definition. Suppose
that $e_{i}^{\ast}(c^{\ast})^{t}\in Lf(c)+N$ for all $0\leq t\leq r$, where
$r\geq n-1$. Then multiplying $f(c)$ on the left by $e_{i}^{\ast}(c^{\ast
})^{r+1}$ we obtain
\[
e_{i}^{\ast}(c^{\ast})^{r+1}=e_{i}^{\ast}(c^{\ast})^{r+1}f(c)-a_{1}e_{i}%
^{\ast}(c^{\ast})^{r}-\cdots-a_{n}e_{i}^{\ast}(c^{\ast})^{r+1-n}\in Lf(c)+N\,.
\]
Thus $S=L/(Lf(c)+N)$ is finitely presented, as desired.
\end{proof}

For the class of graphs $E$ appearing in Theorem \ref{Simple Fin Presented},
we can indeed classify all the simple left $L_{K}(E)$-modules. Specifically,
we show that in this case every simple $L_{K}(E)$-module determines and is
determined by a unique primitive ideal of $L_{K}(E)$.

\begin{corollary}
\label{simple is Chen}Let $E$ be a finite graph such that every vertex in $E$
is the base of at most one cycle. Then every simple $L_{K}(E)$-module is a
Chen module. Indeed, for any primitive ideal $P$ of $L_{K}(E)$ there exists a
unique simple $L_{K}(E)$-module $S$ (which is a Chen module) such that the
annihilator of $S$ is $P$.
\end{corollary}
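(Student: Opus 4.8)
The plan is to establish the two assertions of the corollary separately: first, that every simple $L_K(E)$-module is a Chen module, which is the substantive part; and second, that the annihilator map is then a bijection, which yields the uniqueness statement. Before starting I would record two consequences of the standing hypothesis that every vertex of $E$ is the base of at most one cycle. Since a vertex lying on two distinct cycles would be the base of two distinct cycles, all cycles of $E$ are vertex-disjoint, hence exclusive; and since every infinite path in a finite graph must repeat a vertex, every infinite path is tail-equivalent to $c^{\infty}$ for a unique cycle $c$. Consequently the only Chen modules available (Definition \ref{def:Chen-module}) are $\mathbf{N}_w$ with $w$ a sink, $V_{[c^{\infty}]}$ with $c$ a cycle, and $V_{[c^{\infty}]}^f$ with $c$ a cycle and $f\neq x-1$ irreducible; there are no infinite emitters and no $V_{[p]}$ with $p$ failing to end in a cycle.

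For the first assertion I would argue by induction on the number $n$ of cycles of $E$, paralleling the proof of Theorem \ref{Simple Fin Presented}. When $n=0$ the algebra $L$ is finite-dimensional semisimple and every simple module is $\mathbf{N}_w$ for a sink $w$. For the inductive step I would first reduce to the case of no sources: eliminating a source that is not a sink is exactly restriction to the hereditary set $H=E^0\setminus\{v\}$, so by Lemma \ref{No sources} and Lemma \ref{lem:Morita-inv-Chenmods} the resulting Morita equivalence carries simple modules to simple modules and Chen modules to Chen modules of the same type, in both directions (isolated vertices split off as $K$-factors whose simple module is $\mathbf{N}_w$). Assuming $E$ has no sources, I would choose a maximal cycle $c$; the absence of sources forces $c$ to have no entries, so $H=E^0\setminus c^0$ is hereditary and saturated, while the hypothesis forbids chords on $c$, so the quotient graph $E\setminus(H,B_H)$ is precisely the cycle $C_r$ and $L/M\cong L_K(C_r)\cong M_r(K[x,x^{-1}])$, where $M=I(H,B_H)$.

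Now let $S$ be an arbitrary simple $L$-module; since $M$ is an ideal, either $MS=0$ or $MS=S$. If $MS=0$, then $S$ is a simple module over $M_r(K[x,x^{-1}])$, hence the pullback of $K[x,x^{-1}]/(f(x))$ for some irreducible $f$; comparing annihilators via Lemmas \ref{lem:annih-infpath}(2) and \ref{lem:annih-f(x)}, this module is $V_{[c^{\infty}]}$ when $f=x-1$ and $V_{[c^{\infty}]}^f$ otherwise, so $S$ is a Chen module. If $MS=S$, set $e=\sum_{w\in H}w$; this is a full idempotent of $M$ with $eLe\cong L_K(E_H)$, where $E_H$ has $n-1$ cycles and still satisfies the hypothesis. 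The Morita machinery invoked in Theorem \ref{Simple Fin Presented} via \cite{AM} gives $S\cong Le\otimes_{eLe}(eS)$ with $eS$ a simple $eLe$-module, which is a Chen module by the induction hypothesis; Lemma \ref{lem:Morita-inv-Chenmods} then shows $S$ is a Chen module of the same type. This completes the proof that every simple $L$-module is a Chen module.

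Finally I would assemble the bijection. Existence of a Chen module with annihilator $P$ is Theorem \ref{thm:realizingprims}. For uniqueness, suppose $S$ is simple with ${\rm Ann}_L(S)=P$; by the first part $S$ is a Chen module, so it suffices to observe that distinct Chen modules have distinct annihilators. This follows from Proposition \ref{prop:pairwise-non-iso} together with the explicit annihilator computations and the classification in \cite[Theorem 4.3]{Ranga}: a graded $P$ forces $S=\mathbf{N}_w$ with $w$ the unique sink satisfying $E^0\setminus(P\cap E^0)=M(w)$, while a non-graded $P=I(H,B_H,f(c))$ determines both the cycle $c$ and the polynomial $f$, hence determines $S$ as $V_{[c^{\infty}]}$ or $V_{[c^{\infty}]}^f$. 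Thus $S$ is unique and the map $\widehat{L}\to{\rm Prim}(L)$ is a bijection. I expect the main obstacle to be the bookkeeping in the inductive step, namely verifying that the source-elimination and corner Morita equivalences transport Chen modules to Chen modules of matching type, for which Lemma \ref{lem:Morita-inv-Chenmods} is the essential tool, and identifying the simple modules of $L/M\cong M_r(K[x,x^{-1}])$ with the twisted Chen modules $V_{[c^{\infty}]}^f$.
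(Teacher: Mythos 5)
Your proof is correct and follows essentially the same inductive argument as the paper: induction on the number of cycles, source elimination via Lemma \ref{No sources}, passage to a maximal cycle $c$ with $M=I(E^0\setminus c^0)$, and the dichotomy $MS=S$ (handled by the Morita reduction to $L_K(E_H)$ together with Lemma \ref{lem:Morita-inv-Chenmods}) versus $MS=0$ (handled by identifying $S$ with a simple $K[x,x^{-1}]$-module and hence with $V_{[c^{\infty}]}$ or $V^f_{[c^{\infty}]}$). The only cosmetic differences are that you keep $L/M\cong M_r(K[x,x^{-1}])$ instead of first invoking Lemma \ref{cycle to loop} to collapse $c$ to a loop, and you verify injectivity of the annihilator map globally at the end (using that, under the hypothesis, distinct Chen modules have distinct annihilators) rather than case by case inside the induction; both variants are sound.
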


\begin{proof}
The proof uses the same kind of induction as in Theorem
\ref{Simple Fin Presented}. Let $n$ be the number of distinct cycles in $E$.
If $n=0$, then $E$ is semisimple artinian, and the simple modules are in
bijective correspondence with the sinks of $E$. So all of them are of the form
$\mathbf{N}_{w}$ for a sink $w$, and distinct simple modules have distinct
annihilators. Assume the result is true for graphs with less than $n$ distinct
cycles, and let $E$ be a finite graph with $n$ cycles satisfying the required
hypothesis. Since, by Theorem \ref{thm:realizingprims}, we can realize every
primitive ideal as the annihilator of at least one Chen simple module, and
since, for two Morita-equivalent unital rings $R$ and $S$, there is a
bijective correspondence between the isomorphism classes of simple $R$-modules
and the isomorphism classes of simple $S$-modules, and also a bijective
correspondence between primitive ideals of $R$ and primitive ideals of $S$
which is compatible with the above in the sense that respects annihilators of
simple modules, we may use Lemma \ref{No sources} a finite number of times and
reduce to the case where $E$ does not have sources.

Let $c$ be a maximal cycle in $E$. Using Lemma \ref{cycle to loop}, we can
further assume that $c$ is a loop, based at $v$. Let $M=I(H)$, where
$H=E^{0}\setminus\{v\}$.

Set $L:=L_{K}(E)$. If $S$ is a simple $L$-module such that $MS=S$, then as in
the proof of Theorem \ref{Simple Fin Presented} we have that $S\cong
Le\otimes_{eLe}S^{\prime}$, where $S^{\prime}$ is a simple $L_{K}(E_{H}%
)$-module. By the inductive hypothesis, $S^{\prime}$ is a Chen $L_{K}(E_{H}%
)$-module, and therefore $S$ is a Chen $L_{K}(E)$-module by Lemma
\ref{lem:Morita-inv-Chenmods}. Moreover if $S_{1}$ and $S_{2}$ are two simple
$L$-modules such that $MS_{i}=S_{i}$ for $i=1,2$, and $S_{1}\ncong S_{2}$,
then $S_{1}^{\prime}\ncong S_{2}^{\prime}$ and so by induction hypothesis,
$\text{Ann}_{L(E_{H})}(S_{1}^{\prime})\neq\text{Ann}_{L(E_{H})}(S_{2}^{\prime
})$, which implies that
\begin{align*}
\text{Ann}_{M}(S_{1})  & =Le\otimes_{eLe}\text{Ann}_{L(E_{H})}(S_{1}^{\prime
})\otimes_{eLe}eL\\
& \neq Le\otimes_{eLe}\text{Ann}_{L(E_{H})}(S_{2}^{\prime})\otimes
_{eLe}eL=\text{Ann}_{M}(S_{2})
\end{align*}
and so $\text{Ann}_{L}(S_{1})\neq\text{Ann}_{L}(S_{2})$. If $MS_{1}=S_{1}$ and
$MS_{2}=0$ then $M\nsubseteq\text{Ann}_{L}(S_{1})$ and $M\subseteq
\text{Ann}_{L}(S_{2})$, so that $S_{1}$ and $S_{2}$ have different annihilators.

Finally suppose that $MS_{1}=0=MS_{2}$ and that $S_{1}\ncong S_{2}$. Then
there exist distinct irreducible polynomials $f(x)$ and $g(x)$ in
$K[x,x^{-1}]$ such that $S_{1}\cong L/(M+Lf(c))$ and $S_{2}\cong L/(M+Lg(c))$.
Therefore $\text{Ann}_{L} (S_{1}) = M+Lf(c) \ne M+Lg(c)=\text{Ann}_{L}
(S_{2})$. Also it can be easily verified that $S_{1}\cong V^{f}_{[q]}$, where
$q=c^{\infty}$, so that $S_{1}$ is a Chen simple module.

This completes the proof.
\end{proof}

\begin{example}
 \label{ex:Jacobson-alg}
 Let $E$ be the graph with $E^0=\{v,w \}$ and $E^1=\{e,f\}$ such that
$s(e)=r(e)=v=s(f)$ and $r(f)=w$. Then the Leavitt path
algebra $L_K(E)$ is the Jacobson algebra $\mathbb S _1 = K\langle x,y\mid yx=1 \rangle .$
This algebra is non-noetherian but all the simple modules are finitely presented by
Theorem \ref{Simple Fin Presented}. The structure of the simple $L_K(E)$-modules is well-known
(see \cite[Lemma 3.1]{bavula} or \cite[5.10(3)]{AB}). The 
Chen module $\mathbf{N}_w$ corresponding to the sink $w$ is the simple module
$K[x]$ (cf. \cite{bavula}). The other simple modules are the Chen modules $V^f_{[q]}$, where $q=e^{\infty}$
and $f(x)$ is an irreducible polynomial in $K[x,x^{-1}]$.
In \cite{bavula}, Bavula finds all the simples modules over the algebras 
$\mathbb S _n:= \mathbb S_1\otimes \overset{(n)}{\cdots} \otimes  \mathbb S_1$, for all $n\ge 1$.
It would be interesting to know whether similar results can be obtained for tensor products of Leavitt path algebras of the form $L_K(E)$,
where $E$ is a graph such that every vertex is the basis of at most one cycle.
\end{example}

We are now ready to prove our main result.

\medskip

\noindent {\it Proof of Theorem \ref{theor:all-characterizations}:}
 (1) $\iff $ (2) $\iff $ (3) is Theorem \ref{Simple Fin Presented}, and (3) $\implies $ (4) is shown in Corollary \ref{simple is Chen}.
 
 (4) $\implies $ (5). If $M$ is a simple $L$-module, then, by Theorem \ref{thm:realizingprims} there exists a Chen module $N$
 such that $\text{Ann}_L(M)=\text{Ann}_L(N)$. Now (4) gives $M\cong N$. Therefore, every simple $L$-module is a Chen module.
 
 (5) $\implies $ (3). Suppose that $v\in E^0$ is the base of two different cycles $g$ and $h$. We will build a simple finitely presented  
 left $L$-module which is not a Chen module. 
 
 We begin by building a simple, finite-dimensional, $P[\ol{E}]$-module. We write $g=\alpha_1 \cdots \alpha _n$, with $v=v_1=s(g)=r(g)$ 
 and $v_i= s(\alpha_i) = r(\alpha_{i-1})$ for $i=2,\dots ,n$. Similarly, set $h=\beta_1\cdots \beta _m$, with $w_1=v=s(h)=r(h)$ and 
  $w_j= s(\beta_j) = r(\beta_{j-1})$ for $j=2,\dots ,m$. For $i=1,\dots ,n $, put $M_{v_i} = z_iK$, a $1$-dimensional vector space.
  For $w_j\in h^0\setminus g^0$, set $M_{w_j}= t_jK$, a $1$-dimensional vector space. If $w_j=v_i\in g^0\cap h^0$, set $t_j=z_i$. 
  Set also $M_w= 0$ if $w\notin g^0\cup h^0$. 
 This defines a family $(M_w)_{w\in E^0}$ of finite-dimensional vector spaces. Now, for $i=1,\dots ,n$, define a linear map 
 $$\Phi_{\alpha_i^*}\colon M_{v_{i+1}}\longrightarrow M_{v_i}$$
 by $\Phi_{\alpha_i^*}(z_{i+1})= z_i$ (where $v_{n+1}= v_1$). Similarly, if $j=1,\dots , m$ let $\Phi_{\beta_j^*}\colon M_{w_{j+1}}\to M_{w_j}$
 be the linear such that $\Phi _{\beta_j^*}(t_{j+1}) = t_j$. If $\alpha \notin g^1\cup h^1$, define $\Phi_{\alpha^*}= 0$. In this way, we have
 defined a family $\Big( (M_w)_{w\in E^0}, (\Phi _{\alpha^*})_{\alpha\in E^1} \Big)$, which gives rise to a $P[\ol{E}]$-module $M$, with underlying vector space
 $M=\bigoplus _{w\in E^0} M_w$. Observe that $\text{dim}_K (M)= | g^0 \cup h^0 |$.
 
We claim that $M$ is a simple $P[\ol{E}]$-module. To see this, let 
$$a= \sum_{i= 1}^n \lambda_i z_i + \sum _{j\in J} \mu _j  t_j$$
be a nonzero element in $M$, where $J= \{j\in \{1,\dots , m\} : w_j\notin g^0 \}$. Assume that $\lambda _{i_0}\ne 0$. 
Then $\alpha_{i_0-1}^* a = \lambda_{i_0} z_{i_0-1}\ne 0$ (where $i_0-1$ is computed mod $n$). Similarly, if $\mu_{j_0}\ne 0$ for some $j\in J$, 
then $\beta_{j_0-1}^* a= \mu_{j_0} w_{j_0-1}\ne 0$. In either case we obtain that $z_1=t_1\in P[\ol{M}]a$, which implies the simplicity of $M$. 

By \cite[Lemma 5.7]{AB},  $\widetilde{M}:=  L\otimes _{P[\ol{E}]} M$ is a finitely presented simple $L$-module.
Assume, by way of contradiction, that $\widetilde{M} $ is a Chen module. It is easy to show, by using the arguments in Lemma \ref{lem:annih-infpath}(1) 
that the annihilator of $\widetilde{M}$
is $I(H(g^{\infty}))=I(H(h^{\infty}))$, and so $\widetilde{M}$ cannot be a Chen module of type (1) or (5). Hence, there is an infinite path $p$
such that $\widetilde{M}\cong V_{[p]}$. 
Now, it follows from Proposition \ref{[p]Simply Presented} 
that $p$ must be tail-equivalent to a rational path of the form $q^{\infty}$, where $q$
is a primitive closed path in $E$, so that $\widetilde{M}\cong V_{[q^{\infty}]}$. 
Write $q=  e_1\cdots e_r$, with $e_i\in E^1$. For $i=1,\dots ,r $, let $q_i=e_i \cdots e_{i-1}$ be the $i$-th rotate of $q$.
Then $N= \bigoplus _{i=1}^r q_i^{\infty} K$ is a simple, finite-dimensional $P[\ol{E}]$-module, and 
$P[E]N= V_{[q^{\infty}]}$. This implies that $N$ is the smallest lattice of $ V_{[q^{\infty}]}$, see \cite[Proposition 7.2(3)]{AB}. Since the minimal lattice of $\widetilde{M}$ is $M$, we obtain 
a $P[\ol{E}]$-isomorphism $\phi \colon M\to N$. In particular $r= \text{dim}_K (N)= |g^0\cup h^0 |$. Moreover $\text{dim}_K( w N) = 1$ for all $w\in g^0\cup h^0$, which implies that 
$q$ must be a cycle, because it cannot pass through the same vertex twice. Let $i$ be the smallest positive integer such that $\alpha _i \ne \beta _i$. 
Then either $\alpha_i^* q_i^{\infty} = 0$ or $\beta_i^*q_i^{\infty} = 0$. Assume, for convenience, that $\alpha_i^* q_i^{\infty} =0$. Then, since $q$ is a cycle, we must have
$\alpha_i ^*q_j^{\infty}= 0$ for all $j=1,\dots , n$, and thus $\alpha_i^*N=0$. Hence $0= \alpha_i ^*M\ne 0$ and we have arrived to a contradiction. 
Therefore, we conclude that $\widetilde{M}$ is not a Chen module. \qed

\begin{remark}
From Theorem 5 of \cite{AAJZ}, it is interesting to observe that, for a finite graph $E$, 
the equivalent conditions of Theorem \ref{theor:all-characterizations}
are also equivalent to the condition that 
$L_{K}(E)$ has finite Gelfand-Kirillov dimension.
\end{remark}

\textbf{Acknowledgement:} 
Initial part of this work was done when the second-named author visited the
Universitat Autonoma de Barcelona during May 2013 and he gratefully
acknowledges the support and the hospitality of the faculty of the Department
of Mathematics.


\begin{thebibliography}{99}                                                                                               %

\bibitem {AA}
G. Abrams, G. Aranda Pino, \emph{The Leavitt path algebra of a
graph}, J. Algebra  {\bf 293}  (2005),  319--334.

\bibitem {AAS} G. Abrams, P. Ara, M. Siles Molina, Leavitt path algebras. A
primer and handbook, Springer. (To appear)

\bibitem {ABR} G. Abrams, J. Bell, K. M. Rangaswamy, \emph{On prime non-primitive von
Neumann regular algebras}, Trans. Amer. Math. Soc. (To appear).

\bibitem {ALPS} G. Abrams, A. Louly, E. Pardo, C. Smith, \emph{Flow invariants in the
classification of Leavitt path algebras}, J. Algebra {\bf 333} (2011), 202--231.

\bibitem {AF} F.W. Anderson, K. Fuller, Rings and categories of modules,
Graduate Texts in Math. 13,  Springer-Verlag, 1974.

\bibitem {AAJZ} A. Alahmadi, H. Alsulami, S.K. Jain, E. Zelmanov, \emph{Leavitt
\ path algebras of finite Gelfand-Kirillov dimension}, J. Algebra and
Applications {\bf 11} 1250225 (2012) [6 pages].

\bibitem {AM} P. N. Anh, L. Marki, \emph{Morita equivalence for rings without
identity}, Tsukuha J. Math. {\bf 11} (1987), 1--16.

\bibitem {A} P. Ara, \emph{Finitely presented modules over Leavitt algebras}, J.
Algebra {\bf 191} (2004), 1--21.

\bibitem {AB} P. Ara, M. Brustenga, \emph{Module theory over Leavitt path algebras
and K-theory}, J. Pure Appl. Algebra {\bf 214} (2010), 1131--1151.

\bibitem {ABC} P. Ara, M. Brustenga, G. Corti\~{n}as, \emph{K-theory of Leavitt path
algebras}, M\"{u}nster J. of Math. {\bf 2} (2009), 5--34.

\bibitem {AMP} P. Ara, M.A. Moreno, E. Pardo, \emph{Non-stable K-theory for graph
algebras}, Algebra Represent. Theory  {\bf 10} (2007), 157-178.

\bibitem {AP} P. Ara, E. Pardo, \emph{Stable rank for Leavitt path algebras}, Proc.
Amer. Math. Soc. {\bf 136} (2008), 2375--2386.

\bibitem{AR} P. Ara, K. M. Rangaswamy, Leavitt path algebras of finite representation type,
 	arXiv:1309.7917 [math.RA].

\bibitem {AMMS} G. Aranda Pino, D. Mart\'{\i}n Barquero, C. Mart\'{\i}n
Gonzalez, and M. Siles Molina, \emph{Socle theory for Leavitt path algebras of
arbitrary graphs}, Rev. Mat. Iberoamericana {\bf 26} (2010), 611--638.

\bibitem{bavula} V. V. Bavula, \emph{The algebra of one-sided inverses of a polynomial algebra},
J. Pure Appl. Algebra {\bf 214} (2010), 1874--1897.

\bibitem {C} X. W. Chen, \emph{Irreducible representations of Leavitt path algebras},
Forum Math {\bf 20} (2012), DOI 10,1515.

\bibitem {GR} D. Goncalves, D. Royer, \emph{On the representations of Leavitt path
algebras}, J. Algebra {\bf 333} (2011), 258--272.

\bibitem{LP} L. Le Bruyn, C. Procesi, \emph{Semisimple representations of quivers}, 
Trans. Amer. Math. Soc. {\bf 317} (1990), 585--598.

\bibitem{Lea} W. G. Leavitt, \emph{The module type of a ring}, Trans. Amer. Math. Soc. {\bf 103} (1962), 113--130.

\bibitem {Ranga} K. M. Rangaswamy, \emph{The theory of prime ideals of Leavitt path algebras over
arbitrary graphs}, J. Algebra {\bf 375} (2013), 73--96.


\bibitem{Smithadvs} S. P. Smith, \emph{Category equivalences involving graded modules over path algebras of quivers}, 
Adv. Math. {\bf 230} (2012), 1780--1810.

\bibitem{Smithtiling} S. P. Smith, \emph{The space of Penrose tilings and the non-commutative curve with homogeneous coordinate ring
$K\langle x,y\rangle /(y^2)$}, J. Noncommutative Geometry  (to appear);  arXiv:1104.3811v2 [math.RA].

\bibitem {T} M. Tomforde, \emph{Uniqueness theorems and ideal structure of Leavitt
path algebras}, J. Algebra {\bf 318} (2007), 270--299.
\end{thebibliography}
\end{document}